\newtheorem{thm}{Theorem}[section]
\newtheorem{define}[thm]{Definition}
\newtheorem{lemma}[thm]{Lemma}
\def\na{\nabla}
\newcommand{\p}{\partial}
\newcommand{\beq}{\begin{equation}}
\newcommand{\eeq}{\end{equation}}
\newcommand{\ben}{\begin{eqnarray}}
\newcommand{\een}{\end{eqnarray}}
\newcommand{\beno}{\begin{eqnarray*}}
\newcommand{\eeno}{\end{eqnarray*}}
\numberwithin{equation}{section}
\subjclass[2010]{35A05, 35Q35, 76D03}
\keywords{Magnetohydrodynamic equation; uniqueness; weak solution}
\begin{document}

\title[The magnetohydrodynamic equation]{Unique weak solutions of the non-resistive  magnetohydrodynamic equations with fractional dissipation}

\author[Jiu, Suo, Wu and Yu]{Quansen Jiu$^{1}$, Xiaoxiao Suo$^{2}$, Jiahong Wu$^{3}$ and Huan Yu$^{4}$}

\address{$^{1}$ School of Mathematics, Capital Normal University, Beijing 100037, P.R. China}

\email{jiuqs@mail.cnu.edu.cn}

\address{$^2$ School of Mathematics, Capital Normal University, Beijing 100037, P.R. China}

\email{xiaoxiao\_suo@163.com}

\address{$^3$ Department of Mathematics, Oklahoma State University, Stillwater, OK 74078, United States}

\email{jiahong.wu@okstate.edu}

\address{$^4$   School of Applied Science, Beijing Information Science and Technology University, Beijing, 100192, P.R.China}

\email{huanyu@bistu.edu.cn}

\vskip .2in
\begin{abstract}
This paper examines the uniqueness of weak solutions to the d-dimensional
magnetohydrodynamic (MHD) equations with the fractional dissipation $(-\Delta)^\alpha u$ and without the magnetic diffusion. Important progress has been made on the standard Laplacian dissipation case $\alpha=1$. This paper discovers that there are new phenomena with the case $\alpha<1$. The approach for $\alpha=1$ can not be directly extended to $\alpha<1$. We establish that, for $\alpha<1$, any initial data $(u_0, b_0)$ in the inhomogeneous Besov space $B^\sigma_{2,\infty}(\mathbb R^d)$ with $\sigma> 1+\frac{d}{2}-\alpha$ leads to a unique local solution. For the case $\alpha\ge 1$, $u_0$ in the homogeneous Besov space $\mathring B^{1+\frac{d}{2}-2\alpha}_{2,1}(\mathbb R^d)$ and $b_0$ in $ \mathring B^{1+\frac{d}{2}-\alpha}_{2,1}(\mathbb R^d)$ guarantees the existence and
uniqueness. These regularity requirements appear to be optimal.
\end{abstract}
\maketitle

\section{Introduction}

The MHD equations govern the motion of electrically conducting fluids such as plasmas, liquid metals, and electrolytes. They consist of a coupled
system of the Navier-Stokes equations of fluid dynamics
and Maxwell's equations of electromagnetism. Since their initial derivation by the Nobel Laureate H. Alfv\'{e}n in 1942 \cite{Alf}, the MHD equations have played pivotal roles in the study of many phenomena in geophysics,
astrophysics, cosmology and engineering (see, e.g., \cite{Bis,Davi}). Besides their wide physical applicability, the MHD equations are also of great interest in mathematics. As a coupled system, the MHD equations contain much richer structures than the Navier-Stokes
equations. They are not merely a combination of two parallel
Navier-Stokes type equations but an interactive and integrated system. Their distinctive features make analytic studies a great challenge but offer new opportunities.

\vskip .1in
Attention here is focused on the d-dimensional non-resistive MHD equation with fractional dissipation,
\beq \label{mhd}
\begin{cases}
\p_t u + u\cdot \na u + \nu (-\Delta)^\alpha u = -\na P + b\cdot\na b, \quad x \in \mathbb R^d, \,\, t>0, \\
\p_t b  + u\cdot\na b = b\cdot\na u, \quad x \in \mathbb R^d, \,\, t>0, \\
\na \cdot u =\na \cdot b =0, \quad x \in \mathbb R^d, \,\, t>0, \\
u(x,0) =u_0(x), \quad b(x,0) =b_0(x),\quad x \in \mathbb R^d,
\end{cases}
\eeq
where $u$, $P$ and $b$ represent the velocity, the pressure and the magnetic field, respectively, and $\nu>0$ is the kinematic viscosity and $\alpha>0$ is a parameter. The fractional Laplacian operator
$(-\Delta)^\alpha$ is defined via the Fourier transform,
$$
\widehat{(-\Delta)^\alpha f}(\xi) = |\xi|^{2\alpha}\, \widehat{f}(\xi),
$$
where
$$
\widehat{f}(\xi) = \frac{1}{(2\pi)^{d/2}} \int_{\mathbb R^d}
e^{-i x\cdot \xi} \, f(x) \,dx.
$$
When $\alpha =1$, (\ref{mhd}) reduces
to the standard MHD equation without magnetic diffusion, which
models electrically conducting  fluids that can be treated as perfect
conductors such as strongly collisional plasmas. When $\alpha>0$ is fractional, (\ref{mhd}) may be used to model nonlocal and long-range
diffusive interactions. Mathematically (\ref{mhd}) allows us to study a family of
equations simultaneously and provides a broad view on how the solutions are related to the sizes of $\alpha$.

\vskip .1in
One of the most fundamental issues on the MHD equations is the well-posedness problem. Mathematically rigorous foundational work has been laid by G. Duvaut and J. L. Lions in \cite{DuLi} and by M. Sermange and R. Temam in \cite{SeTe}. The MHD equations have recently gained renewed interests and there have been substantial developments on the well-posedness problem, especially when
the MHD equations involve only partial or fractional dissipation (see,
e.g., \cite{CaoReWu, CaoWu, CaoWuYuan, ChDai,DongLiWu1, DongLiWu0, FNZ,JNW,JiuZhao2,Wu4,
	Yam1,Yam2,Yam3,Yam4,WuYang,YuanZhao}). A summary
on some of the recent results can be found in a review paper \cite{WuMHD2018}.
(\ref{mhd}) with $\alpha \ge 1+\frac{d}{2}$
always has a unique global solution (see \cite{Wu2}). Yamazaki
was able to improve this result by weakening the dissipation
by a logarithm \cite{Yam4}. It remains an outstanding open problem whether
or not (\ref{mhd}) with $\alpha < 1+\frac{d}{2}$ can have
finite-time singular classical solutions. Even the global existence of Leray-Hopf weak solutions is not known due to the lack of suitable strong convergence in $b$.
In spite of the difficulties due to the lack of magnetic diffusion, significant progress has been made on the global well-posedness of solutions near background magnetic fields and many exciting results have been obtained (see,
e.g., \cite{BSS, CaiLei, HeXuYu,  HuX, HuLin, LinZhang1, PanZhouZhu, Ren, Ren2, Tan,WeiZ, WuWu, WuWuXu, TZhang}).

\vskip .1in
Another direction of research on the non-resistive MHD equation has resulted in
steady stream of progress. This direction has been seeking the weakest possible
functional setting for which one still has the uniqueness. The results currently available are for  (\ref{mhd}) with $\alpha=1$. Q. Jiu and D. Niu \cite{JiuNiu} proved the local well-poseness of (\ref{mhd}) with $\alpha=1$ in the Sobolev space $H^s$ with $s\ge 3$. Fefferman, McCormick, Robinson and Rodrigo were able to weaken the regularity assumption to
$(u_0, b_0)\in H^s$ with $s>\frac{d}{2}$ in \cite{FMRR1} and then to
$u_0 \in H^{s-1+\epsilon}$ and $b_0\in H^s$ with $s>\frac{d}{2}$ in \cite{FMRR2}. Chemin, McCormick, Robinson and Rodrigo \cite{CMRR} made further improvement by
assuming only $u_0\in B^{\frac{d}{2}-1}_{2,1}$ and
$b_0\in B^{\frac{d}{2}}_{2,1}$. Here $B^s_{p,q}$ denotes an inhomogeneous Besov space. They obtained the local existence for $d=2$ and $3$, and the uniqueness for $d=3$.
R. Wan \cite{Wan} obtained the uniqueness for $d=2$. J. Li, W. Tan and Z. Yin \cite{LTY} recently made an important progress by reducing the
functional setting to homogeneous Besov space $u_0\in \mathring B^{\frac{d}{p}-1}_{p,1}$ and $b_0\in \mathring B^{\frac{d}{p}}_{p,1}$ with $p\in [1, 2d]$. The definitions of the Besov spaces are provided in the appendix.

\vskip .1in
The aim of this paper is to establish the local existence and uniqueness of weak solutions with the minimal initial regularity assumption and for the largest possible range of $\alpha$'s. The case when $\alpha>1$ can be handled similarly as the case when $\alpha=1$. We can show that, for $\alpha>1$, any initial data $(u_0, b_0)$ with $u_0\in \mathring B^{\frac{d}{2}+ 1 -2\alpha}_{2,1} (\mathbb R^d)$ and $b_0\in \mathring B^{\frac{d}{2}+1-\alpha}_{2,1} (\mathbb R^d)$ leads to a unique local solution.

\vskip .1in
However, when $\alpha<1$, the situation is different and there are new phenomena. The approach for the case $\alpha=1$ can not be directly extended to  $\alpha<1$. We tested several seemingly natural classes of initial data:
\ben
&& (1) \,\,u_0 \in B_{2,1}^{\frac{d}{2}+ 1 -2\alpha} (\mathbb R^d) \quad \mbox{and} \quad b_0\in  B_{2,1}^{\frac{d}{2}} (\mathbb R^d);  \label{jjj8}\\
&& (2) \,\,u_0 \in B_{2,1}^{\frac{d}{2}+ 1 -2\alpha} (\mathbb R^d) \quad \mbox{and} \quad b_0\in  B_{2,1}^{\frac{d}{2} + 1 -\alpha} (\mathbb R^d); \label{jjj9}\\
&& (3)\,\,u_0 \in B_{2,1}^{\frac{d}{2}+ 1 -\alpha} (\mathbb R^d) \quad \mbox{and} \quad b_0\in  B_{2,1}^{\frac{d}{2} + 1 -\alpha} (\mathbb R^d),\label{jjj10}
\een
but it appears impossible to prove the local existence and uniqueness of weak solutions in these functional settings. Our investigation with these initial data leads to several discoveries. First, we realize that, in order to attain the uniqueness, the regularity level of the Besov space for $b_0$ has to have at least $\frac{d}{2}-\alpha +1$-derivative, which is more than $\frac{d}{2}$ for $\alpha<1$. Second, we discover that if the derivative of the Besov setting for $b_0$ exceeds $\frac{d}{2}$, then $u_0$ and $b_0$ should have the same Besov setting in order to establish the existence of solutions. Furthermore, one needs to combine the term of $b\cdot \na b$ in the equation of $u$ and $u\cdot\na b$ in the equation of $b$ in order to generate the cancellation. More technical explanations are given in
Section \ref{diss}. As a consequence of these findings, we choose the following
Besov spaces for $\alpha<1$,
$$
u_0 \in B_{2,\infty}^{\sigma} (\mathbb R^d), \quad b_0\in B_{2,\infty}^{\sigma} (\mathbb R^d), \quad \sigma >\frac{d}{2}+ 1 -\alpha.
$$
These functional settings appear to be optimal when $\alpha<1$. More technical evidence is provided in Section \ref{diss}.
Our precise result is stated in the following
theorem.

\begin{thm} \label{main}
Let $d\ge 2$ and consider (\ref{mhd}) with $0\le \alpha < 1+ \frac{d}{4}$.
Assume the initial data $(u_0, b_0)$ satisfies $\na\cdot u_0 = \na\cdot b_0=0$,
and is in the following Besov spaces
\ben
&& \mbox{for $\alpha \ge 1$}, \qquad u_0 \in \mathring B_{2,1}^{\frac{d}{2}+ 1 -2\alpha} (\mathbb R^d), \quad  b_0\in \mathring B_{2,1}^{\frac{d}{2}+1 -\alpha} (\mathbb R^d), \label{rr0}\\
&& \mbox{for $\alpha < 1$}, \qquad u_0 \in B_{2,\infty}^{\sigma} (\mathbb R^d), \quad b_0\in B_{2,\infty}^{\sigma} (\mathbb R^d), \quad \sigma >\frac{d}{2}+ 1 -\alpha. \label{rrr}
\een
Then there exist
$T>0$ and a unique local solution $(u, b)$ of (\ref{mhd}) satisfying, in  the case of $\alpha\ge 1$,
$$
u \in C([0, T]; \mathring B^{\frac{d}{2}+ 1 -2\alpha}_{2,1} (\mathbb R^d)) \cap L^1(0, T; \mathring B^{\frac{d}{2}+ 1}_{2,1}(\mathbb R^d)), \,
b\in C([0, T]; \mathring B_{2,1}^{\frac{d}{2}+1 -\alpha} (\mathbb R^d))
$$
and, in the case of $\alpha<1$,
$$
u \in C([0, T]; B^{\sigma}_{2,\infty} (\mathbb R^d)) \cap \widetilde L^2(0, T; B^{\alpha + \sigma}_{2,\infty}(\mathbb R^d)), \,
b\in C([0, T]; B_{2,\infty}^{\sigma} (\mathbb R^d)).
$$
\end{thm}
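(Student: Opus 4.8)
\emph{Overall strategy.} I would localize both equations in frequency with the Littlewood--Paley blocks $\Delta_j$ and establish uniform a priori bounds in the asserted spaces; existence then follows from a standard approximation (Friedrichs frequency truncation, or a Picard iteration on the mollified system) together with a compactness/limiting argument, while uniqueness comes from a difference estimate. All nonlinear terms are treated by Bony's decomposition $fg = T_f g + T_g f + R(f,g)$ and the associated product, commutator, and transport estimates in Besov spaces (stated in the appendix). The two components are structurally different: $u$ enjoys the fractional parabolic smoothing of $\nu(-\Delta)^\alpha u$, whereas $b$ obeys the pure transport equation $\p_t b + u\cdot\na b = b\cdot\na u$ with no regularizing mechanism, so every derivative of $b$ must be \emph{propagated} rather than gained. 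This forces an estimate on the \emph{coupled} $(u,b)$ energy rather than on each field separately.

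\emph{Coupled energy estimate and the crucial cancellation.} I would pair $\Delta_j$ of the $b$--equation against $\Delta_j b$ and $\Delta_j$ of the $u$--equation against $\Delta_j u$ in $L^2$, and add. Since $\na\cdot u=0$, the transport terms $\int u\cdot\na\Delta_j b\cdot\Delta_j b$ and $\int u\cdot\na\Delta_j u\cdot\Delta_j u$ vanish, while the dissipation yields $\nu\langle(-\Delta)^\alpha\Delta_j u,\Delta_j u\rangle\simeq \nu 2^{2\alpha j}\|\Delta_j u\|_{L^2}^2$, which after summation furnishes exactly the gain $u\in\widetilde L^2_t B^{\alpha+\sigma}_{2,\infty}$ (resp.\ $u\in L^1_t\mathring B^{d/2+1}_{2,1}$ when $\alpha\ge1$). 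What remains are the coupling terms $\int\Delta_j(b\cdot\na b)\cdot\Delta_j u$ and $\int\Delta_j(b\cdot\na u)\cdot\Delta_j b$, plus the transport commutator $[\Delta_j,u\cdot\na]b$. Each coupling term loses a full derivative and is uncontrollable in isolation; the decisive observation is that their top-order (low--high) parts cancel:
\beno
&&\int (S_{j-1}b\cdot\na)\Delta_j b\cdot\Delta_j u + \int (S_{j-1}b\cdot\na)\Delta_j u\cdot\Delta_j b\\
&&\qquad = \int S_{j-1}b\cdot\na\big(\Delta_j u\cdot\Delta_j b\big) = 0,
\eeno
where the first integral originates from $b\cdot\na b$ in the $u$--equation and the second from $b\cdot\na u$ in the $b$--equation, and the sum vanishes because $\na\cdot S_{j-1}b=0$. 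This is the cancellation produced by combining $b\cdot\na b$ with $b\cdot\na u$; the surviving paraproduct, remainder, and commutator pieces all carry the derivative on a low-frequency factor and are therefore estimable.

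\emph{The two regimes.} For $\alpha\ge1$ the spaces are scaling--critical and the $q=1$ summability realizes $\mathring B^{d/2}_{2,1}\hookrightarrow L^\infty$; $b\cdot\na b$ lands in $u$'s space $\mathring B^{d/2+1-2\alpha}_{2,1}$ and $b\cdot\na u$ in $b$'s space $\mathring B^{d/2+1-\alpha}_{2,1}$, so the coupled estimate closes as in the $\alpha=1$ theory, with the restriction $\alpha<1+\tfrac d4$ dictated by the product and remainder estimates needed to place the quadratic terms in the solution spaces. For $\alpha<1$ one has $\sigma>\tfrac d2$, so $B^\sigma_{2,\infty}$ is a multiplier algebra and products are benign; the genuine difficulty is that $b\cdot\na u$ formally needs $u\in B^{\sigma+1}_{2,\infty}$, whereas the dissipation supplies only $u\in\widetilde L^2_t B^{\sigma+\alpha}_{2,\infty}$ with $\sigma+\alpha<\sigma+1$. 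After the cancellation, the residual commutator and remainder terms require $\na u$ merely at regularity $\tfrac d2$, and here the \emph{strict} inequality $\sigma>\tfrac d2+1-\alpha$ is what saves the argument: it gives $\sigma+\alpha>\tfrac d2+1$, hence $\na u\in\widetilde L^2(0,T;B^{d/2+\epsilon}_{2,\infty})\hookrightarrow \widetilde L^2(0,T;L^\infty)$, so $\int_0^T\|\na u\|_{L^\infty}\,dt\lesssim T^{1/2}\|\na u\|_{\widetilde L^2_t B^{d/2+\epsilon}_{2,\infty}}$ controls the commutator with a favorable power of $T$. This also explains why $u_0$ and $b_0$ must share the same space and why $B_{2,\infty}$ paired with the $\widetilde L^2_t$ gain is the right framework.

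\emph{Main obstacle.} The hardest point, and precisely where the $\alpha=1$ argument breaks, is the $\alpha<1$ closure of the coupled $b$--estimate: because $b$ gains no regularity, the $1-\alpha$ derivative deficit in $b\cdot\na u$ must be absorbed entirely by the cancellation together with the $\alpha$--derivative gain of the fractional dissipation in $\widetilde L^2_t$, leaving no slack and thereby forcing the subcritical index $\sigma>\tfrac d2+1-\alpha$. A secondary obstacle is compactness: with no diffusion in $b$ there is no direct strong convergence of the approximate magnetic fields, so I would upgrade the weak-$*$ limit to a genuine solution either by showing the scheme is Cauchy in a lower-regularity norm (again via the cancellation) or by proving uniqueness first and invoking it. Uniqueness itself I would obtain by writing the equations for the difference of two solutions and running the \emph{same} coupled energy estimate one derivative below the solution space, where the cancellation once more removes the top-order coupling and the remaining terms close by Gronwall.
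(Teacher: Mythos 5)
Your proposal captures the paper's central mechanism for the hard case $\alpha<1$: the paper likewise adds the frequency-localized energies of $u$ and $b$ with the common weight $2^{2\sigma j}$, exploits exactly the cancellation you identify (the low--high parts of $\int \Delta_j(b\cdot\na b)\cdot\Delta_j u$ and $\int \Delta_j(b\cdot\na u)\cdot\Delta_j b$ cancel because $\na\cdot S_j b=0$), uses the dissipation to gain $u\in\widetilde L^2(0,T;B^{\sigma+\alpha}_{2,\infty})$, and uses $\sigma>\frac d2+1-\alpha$ precisely so that $\sigma+\alpha>1+\frac d2$ renders the surviving paraproduct and remainder terms summable with a factor $\sqrt T$. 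The paper implements existence by a successive approximation in which the $(n+1)$-st iterate is placed in the differentiated slots of $b\cdot\na b$ and $b\cdot\na u$, so that the cancellation survives the linearization, and then applies Aubin--Lions with a Cantor diagonal argument; this is consistent with your Friedrichs/Picard plan, with the caveat that a linearization putting both factors at level $n$ would destroy the cancellation, so the choice of slots matters.

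Two points where your sketch deviates, one of which would fail if carried out literally. First, for $\alpha\ge1$ the paper does \emph{not} (and cannot) run the coupled estimate: there $u$ and $b$ lie in different spaces, $\mathring B^{\frac d2+1-2\alpha}_{2,1}$ versus $\mathring B^{\frac d2+1-\alpha}_{2,1}$, so the block energies cannot be added under a single weight; instead the fields are estimated separately, which works only because $\frac d2+1-\alpha\le\frac d2$ makes the product estimates (Lemma \ref{ss}) applicable --- this is the operative content of your remark that $b\cdot\na b$ ``lands in'' $u$'s space. Second, and more importantly, the uniqueness: the paper runs the difference estimate at the bare $L^2$ level, not one derivative below. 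The cancellation kills only the pair $\int b^{(2)}\cdot\na\widetilde b\cdot\widetilde u+\int b^{(2)}\cdot\na\widetilde u\cdot\widetilde b$; the cross terms $Q_3=\int\widetilde b\cdot\na b^{(1)}\cdot\widetilde u$ and $Q_4=-\int\widetilde u\cdot\na b^{(1)}\cdot\widetilde b$ do \emph{not} cancel, and at threshold regularity they cannot be closed by Gronwall alone: since $\sigma$ may lie below $\frac d2+1$, one has $\na b^{(1)}\notin L^\infty$, and the paper absorbs these terms into $\nu\|\Lambda^\alpha\widetilde u\|_{L^2}^2$ via H\"older and Hardy--Littlewood--Sobolev, $\|\widetilde u\|_{L^p}\lesssim\|\Lambda^\alpha\widetilde u\|_{L^2}$ with $\frac1p=\frac12-\frac\alpha d$, the hypothesis $\sigma>\frac d2+1-\alpha$ entering again to place $\na b^{(1)}\in L^{d/\alpha}$. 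Your phrase ``the remaining terms close by Gronwall'' hides this step; the viscous term must appear explicitly in the uniqueness estimate, as it must --- otherwise the argument would prove uniqueness with no dissipation at a regularity strictly below the ideal-MHD uniqueness threshold.
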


Theorem \ref{main} covers a full range of $\alpha \in [0, 1+\frac{d}{4})$
and includes $\alpha=1$ and $\alpha=0$ as two special cases. $\alpha < 1+\frac{d}{4}$ is imposed to satisfy a technical requirement in bounding the
high frequency interaction terms in the paraproduct decomposition. When $\alpha$ reaches this upper bound, the functional setting for $u_0$ is $\mathring B^{-1}_{2,1}$. When $\alpha \ge 1$, the initial data $(u_0, b_0)$ and the corresponding solution are in the homogeneous Besov spaces. For $\alpha<1$, the
functional setting are the inhomogeneous Besov spaces. We may not be able to reduce the assumption for $\alpha<1$ to the  corresponding homogeneous Besov spaces.

\vskip .1in
As aforementioned, the regularity assumptions imposed on  $(u_0, b_0)$ in
Theorem \ref{main} may be the minimal requirements we need for the existence
and uniqueness. We present a detailed explanation in Section \ref{diss}.
Roughly speaking,  when $\alpha\ge 1$, $u_0 \in \mathring B_{2,1}^{\frac{d}{2}+ 1 -2\alpha} (\mathbb R^d)$ in (\ref{rr0})  is necessary in order for the solution $u\in
L^1(0, T; \mathring B^{\frac{d}{2}+ 1}_{2,1}(\mathbb R^d))$, which is more or less the regularity level for the uniqueness. The regularity setting for $u_0$
leads to the corresponding choice for $b_0$, namely $b_0 \in\mathring  B_{2,1}^{\frac{d}{2}+1 -\alpha} (\mathbb R^d)$. In the case when $\alpha<1$, (\ref{rrr}) may be optimal due to our findings discovered in working with
three other initial Besov settings described above in (\ref{jjj8}), (\ref{jjj9}) and (\ref{jjj10}). Another hint comes from the uniqueness requirement for the ideal MHD equations. When $\alpha$ is zero or $\alpha>0$ is small, (\ref{rrr}) is the regularity class that guarantees the uniqueness of solutions to the ideal MHD equation.

\vskip .1in The statement of Theorem \ref{main} clearly indicates that the case
$\alpha\ge 1$ is handled differently from the case $\alpha<1$.  The existence part of
Theorem \ref{main} is proven through a successive approximation process. Naturally we divide the consideration into two cases: $\alpha\ge 1$ and $\alpha<1$. In the case when $\alpha\ge 1$, the successive approximation sequence $(u^{(n)}, b^{(n)})$ satisfies
\beq \label{sb}
\begin{cases}
u^{(1)} =\mathring S_2 u_0, \quad b^{(1)} =\mathring S_2 b_0, \\
\p_t u^{(n+1)} + \nu (-\Delta)^\alpha u^{(n+1)}
= \mathbb P (- u^{(n)}\cdot \na u^{(n+1)} + b^{(n)}\cdot \na b^{(n)}),\\
\p_t b^{(n+1)} = - u^{(n)} \cdot \na b^{(n+1)}
+ b^{(n)}\cdot \na u^{(n)}, \\
\na \cdot u^{(n+1)} =\na \cdot b^{(n+1)} =0,\\
u^{(n+1)} (x,0) =\mathring  S_{n+1} u_0, \quad b^{(n+1)} (x,0) = \mathring S_{n+1} b_0,
\end{cases}
\eeq
where $\mathbb P$ is the standard Leray projection and $\mathring S_j$ is the standard homogeneous low frequency cutoff operator (see the Appendix for its definition). The functional setting for $(u^{(n)}, b^{(n)})$ is given by
 \ben
 &&M= 2 \left( \|u_0\|_{\mathring B^{\frac{d}{2}+ 1 -2\alpha}_{2,1}}
 + \|b_0\|_{\mathring B_{2,1}^{\frac{d}{2}+ 1-\alpha}} \right) \notag \\
&&Y \equiv  \Big\{(u, b) \big|\,\, \|u\|_{\widetilde L^\infty(0,T; \mathring B^{\frac{d}{2}+ 1 -2\alpha}_{2,1})} \le M, \quad  \|b\|_{\widetilde L^\infty(0,T; \mathring B_{2,1}^{\frac{d}{2}+ 1-\alpha})}
\le M, \,\,\, \notag\\
&& \qquad\qquad  \|u\|_{L^1(0, T; \, \mathring B^{\frac{d}{2}+ 1}_{2,1})} \le \delta, \quad  \|u\|_{\widetilde L^2(0,T; \mathring B^{\frac{d}{2}+ 1-\alpha}_{2,1})} \le \delta\Big\}, \label{spaceY}
\een
where $T>0$ is chosen to be sufficiently small and $0< \delta <1$ is specified in
Section \ref{exi}. In the case when $\alpha<1$, $(u^{(n)}, b^{(n)})$ satisfies
\beq \label{sb1}
\begin{cases}
	u^{(1)} = S_2 u_0, \quad b^{(1)} = S_2 b_0, \\
	\p_t u^{(n+1)} + \nu (-\Delta)^\alpha u^{(n+1)}
	= \mathbb P (- u^{(n)}\cdot \na u^{(n+1)} + b^{(n)}\cdot \na b^{(n+1)}),\\
	\p_t b^{(n+1)} = - u^{(n)} \cdot \na b^{(n+1)}
	+ b^{(n)}\cdot \na u^{(n+1)}, \\
	\na \cdot u^{(n+1)} = \na \cdot b^{(n+1)} =0,\\
	u^{(n+1)} (x,0) = S_{n+1} u_0, \quad b^{(n+1)} (x,0) =  S_{n+1} b_0
\end{cases}
\eeq
and the corresponding functional setting is
 \ben
 && M= 2 \max\left\{\|(u_0, b_0)\|_{B^{\sigma}_{2,\infty}}, \frac1{\sqrt{C_0}}\|(u_0, b_0)\|_{B^{\sigma}_{2,\infty}}\right\},
 \notag \\
&& Y \equiv \Big\{(u, b) \big|\,\, \|(u, b)\|_{\widetilde L^\infty(0,T; B^{\sigma}_{2,\infty})} \le M,\,\, \|u\|_{\widetilde L^2(0, T; \, B^{\alpha +\sigma}_{2,\infty})} \le M \Big\}, \label{spaceY1}
\een
where $C_0>0$ is a pure constant as defined in (\ref{xj8}).
The main effort is devoted to showing that $(u^{(n)}, b^{(n)})$ actually converges
to a weak solution of (\ref{mhd}). The process of obtaining a subsequence of $(u^{(n)}, b^{(n)})$
that converges to a weak solution $(u, b)$ of  (\ref{mhd}) is divided into two main steps. The first step is to assert the uniform boundedness of $(u^{(n)}, b^{(n)})$
in $Y$ while the second step is to  extract a strongly convergent subsequence via the Aubin-Lions Lemma. The strong convergence then ensures that   the limit is indeed a weak solution of (\ref{mhd}). The uniform boundedness is shown via an iterative
process. We assume  $(u^{(n)}, b^{(n)}) \in Y$ and show $(u^{(n+1)}, b^{(n+1)}) \in Y$.

\vskip .1in
The technical approach to proving the uniform boundedness for the case
$\alpha\ge 1$ is different from that for the case when $\alpha<1$. For $\alpha< 1$,
we estimate $u^{(n+1)}$  and $b^{(n+1)}$ in $\widetilde L^\infty(0,T; B^{\sigma}_{2,\infty})$, and  $u^{(n+1)}$ in $\widetilde L^2(0, T; \, B^{\alpha +\sigma}_{2,\infty})$ simultaneously. The purpose is to make use of the cancellation
resulting from adding the equations for $\|\Delta_j u^{(n+1)}\|_{L^2}$ and $\|\Delta_j b^{(n+1)}\|_{L^2}$. The cancellation is in the sum
$$
\int_{\mathbb R^d} \Delta_j (b^{(n)}\cdot \na b^{(n+1)})\cdot \Delta_j u^{(n+1)}\,dx + \int_{\mathbb R^d} \Delta_j (b^{(n)}\cdot \na u^{(n+1)})\cdot \Delta_j b^{(n+1)}\,dx,
$$
whose paraproduct decomposition contains
$$
\int_{\mathbb R^d} \left(S_j b^{(n)}\cdot\na \Delta_j b^{(n+1)}\cdot \Delta_j u^{(n+1)} +S_j b^{(n)}\cdot\na \Delta_j u^{(n+1)}\cdot \Delta_jb^{(n+1)}\right)\,dx =0
$$
due to $\na\cdot b^{(n)}=0$. This appears to be the only approach that allows us to show the existence of
solutions in functional spaces with the order of the derivative exceeding $\frac{d}{2}$.
In the case when $\alpha\ge 1$, $b_0 \in \mathring B_{2,1}^{\frac{d}{2}+1 -\alpha} (\mathbb R^d)$ and the order of derivative is $\frac{d}{2}+1 -\alpha \le \frac{d}{2}$. The desired norms of $u^{(n+1)}$  and $b^{(n+1)}$ can be suitably estimated without the cancellation. In addition,  some upper bounds on products in Besov spaces are valid only for $\alpha\ge 1$ and break down when $\alpha<1$.
When $\alpha\ge 1$,
\beno
\|b^{(n)}\cdot \na b^{(n+1)}\|_{B^{\frac{d}{2} -2\alpha +1}_{2,1}(\mathbb R^d)} &\le& \|b^{(n)}\otimes b^{(n+1)}\|_{B^{\frac{d}{2}-2\alpha +2}_{2,1}(\mathbb R^d)}\\
&\le& C\, \|b^{(n)}\|_{B^{\frac{d}{2}-\alpha +1}_{2,1}(\mathbb R^d)}\, \|b^{(n+1)}\|_{B^{\frac{d}{2}-\alpha +1}_{2,1}(\mathbb R^d)}
\eeno
based on the following lemma (see, e.g., \cite[p.90]{BCD} or Lemma 2.6 in \cite{LTY}).
\begin{lemma} \label{ss}
	Let $1\le p\le \infty$, $s_1, s_2 \le \frac{d}{p}$ and $s_1 + s_2 >d\,\max\{0, \frac2p-1\}$. Then
	$$
	\|f\, g\|_{\dot B^{s_1 + s_2 -\frac{d}{p}}_{p,1}(\mathbb R^d)} \le C\, \|f\|_{\dot B^{s_1}_{p,1}(\mathbb R^d)} \, \|g\|_{\dot B^{s_2}_{p,1}(\mathbb R^d)}.
	$$
\end{lemma}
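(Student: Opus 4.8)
The plan is to establish this classical product law via Bony's paraproduct decomposition. Denoting by $\Delta_j$ the homogeneous Littlewood--Paley blocks and by $S_{j-1}=\sum_{k\le j-2}\Delta_k$ the low-frequency cut-offs, I would write
\[
f g = T_f g + T_g f + R(f,g),
\]
where $T_f g=\sum_j S_{j-1}f\,\Delta_j g$ is the paraproduct and $R(f,g)=\sum_j \Delta_j f\,\widetilde\Delta_j g$, with $\widetilde\Delta_j=\Delta_{j-1}+\Delta_j+\Delta_{j+1}$, is the remainder. It then suffices to bound each of the three pieces in $\dot B^{s_1+s_2-\frac dp}_{p,1}$ by $C\,\|f\|_{\dot B^{s_1}_{p,1}}\|g\|_{\dot B^{s_2}_{p,1}}$. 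The two structural hypotheses enter in different places: the conditions $s_1,s_2\le\frac dp$ control the paraproducts, while $s_1+s_2>d\max\{0,\frac2p-1\}$ controls the remainder.

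For $T_f g$, each summand $S_{j-1}f\,\Delta_j g$ is spectrally supported in an annulus of size $\sim 2^j$, so $\Delta_q T_f g$ only involves indices with $|j-q|\le N$. Hölder's inequality gives $\|S_{j-1}f\,\Delta_j g\|_{L^p}\le\|S_{j-1}f\|_{L^\infty}\|\Delta_j g\|_{L^p}$, and Bernstein's inequality together with $s_1\le\frac dp$ yields $\|S_{j-1}f\|_{L^\infty}\le C\,2^{j(\frac dp-s_1)}\|f\|_{\dot B^{s_1}_{p,1}}$, the endpoint $s_1=\frac dp$ being exactly where the $\ell^1$ summation in the Besov norm is used. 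Multiplying by $2^{q(s_1+s_2-\frac dp)}$, summing in $q$, and interchanging the order of summation then gives the desired bound. The term $T_g f$ is identical after exchanging the roles of $f$ and $g$, which forces the symmetric condition $s_2\le\frac dp$.

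The remainder is the delicate piece, and I expect it to be the main obstacle. The basic fact is the continuity of $R$: for any $\sigma_1+\sigma_2>0$ and $\frac1q=\frac1{q_1}+\frac1{q_2}\le 1$, one has $\|R(u,v)\|_{\dot B^{\sigma_1+\sigma_2}_{q,1}}\le C\,\|u\|_{\dot B^{\sigma_1}_{q_1,1}}\|v\|_{\dot B^{\sigma_2}_{q_2,1}}$; the mechanism is that each block $\Delta_k f\,\widetilde\Delta_k g$ is supported in a \emph{ball} $\{|\xi|\lesssim 2^k\}$ rather than an annulus, so $\Delta_j R(f,g)$ collects all $k\ge j-N$, and the summation lemma for spectrally ball-supported pieces demands the positivity of the target regularity $\sigma_1+\sigma_2$. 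To reach the sharp range I would combine this with Besov embeddings in the integrability index. For $p\ge2$, taking $q_1=q_2=p$ places $R(f,g)$ in $\dot B^{s_1+s_2}_{p/2,1}$ under the bare condition $s_1+s_2>0$, and the embedding $\dot B^{s_1+s_2}_{p/2,1}\hookrightarrow\dot B^{s_1+s_2-\frac dp}_{p,1}$ finishes the case. For $1\le p<2$ the space $L^{p/2}$ is unavailable, so I would instead first raise the integrability of the factors, embedding $f\in\dot B^{s_1}_{p,1}\hookrightarrow\dot B^{\,s_1-d(\frac1p-\frac1{p_1})}_{p_1,1}$ and likewise for $g$, choosing $\frac1{p_1}+\frac1{p_2}=1$ so that $R(f,g)$ lands in the $L^1$-based space, and finally embed back into $\dot B^{s_1+s_2-\frac dp}_{p,1}$. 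The positivity requirement for the $L^1$-based remainder estimate then reads precisely $s_1+s_2>d(\frac2p-1)$, which is the stated threshold. Collecting the paraproduct and remainder bounds yields the lemma; since every step belongs to the standard Bony calculus, one may alternatively cite \cite{BCD} for the remainder estimate in the range $1\le p<2$.
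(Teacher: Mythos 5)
Your proposal is correct, but note that the paper contains no proof of Lemma \ref{ss} to compare against: it is quoted as a known product law with a pointer to \cite[p.90]{BCD} and Lemma 2.6 of \cite{LTY}. What you have written is essentially the standard argument behind those citations, so your proof fills in what the paper delegates to the literature.

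The exponent bookkeeping in your sketch checks out at every step. For the paraproducts, the bound
$\|S_{j-1}f\|_{L^\infty}\le C\,2^{j(\frac{d}{p}-s_1)}\|f\|_{\dot B^{s_1}_{p,1}}$
is exactly where $s_1\le \frac{d}{p}$ enters (geometric summation for $s_1<\frac{d}{p}$, the full $\ell^1$ norm at the endpoint $s_1=\frac{d}{p}$), and the symmetric term forces $s_2\le\frac{d}{p}$. For the remainder, your case split reproduces the stated threshold precisely: when $p\ge 2$, H\"older at exponent $p/2$ plus the ball-support summation needs only $s_1+s_2>0$, and the embedding $\dot B^{s_1+s_2}_{p/2,1}\hookrightarrow \dot B^{s_1+s_2-\frac{d}{p}}_{p,1}$ lands in the right space; when $1\le p<2$, embedding both factors into (say) $p_1=p_2=2$ costs $d\left(\frac{2}{p}-1\right)$ derivatives, the $L^1$-based remainder estimate then requires $s_1+s_2>d\left(\frac{2}{p}-1\right)$, and the final embedding $\dot B^{\,s_1+s_2-d(\frac2p-1)}_{1,1}\hookrightarrow \dot B^{\,s_1+s_2-\frac{d}{p}}_{p,1}$ restores exactly the claimed index. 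The only points glossed over are the routine ones (convergence of the Bony series in $\mathcal S_h'$ and well-definedness of the target space, which holds here since $s_1+s_2-\frac{d}{p}\le\frac{d}{p}$ with third index $1$); these are handled in \cite{BCD} and do not affect the validity of your argument.
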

However, when $\alpha<1$, Lemma \ref{ss} breaks down since $s_1= \frac{d}{2} -\alpha +1$ and $s_2 =\frac{d}{2} -\alpha +1$ no longer satisfy the condition $s_1, s_2 \le \frac{d}{2}$.
This difficulty is overcome  by performing a detailed analysis on different frequencies of this product when $\alpha<1$.

\vskip .1in
The rest of this paper is divided into four sections and an appendix. Section \ref{exi} focuses on the proof of the existence part in Theorem \ref{main} for the case when $\alpha \ge 1$ while Section \ref{exi2} is devoted to the case when $\alpha < 1$. Section \ref{uni} presents the proof for the uniqueness part of Theorem \ref{main}. We again distinguish between the case when $\alpha \ge 1$ and the case when $\alpha<1$. Section \ref{diss} explains in detail why the regularity assumptions on the initial data in Theorem \ref{main} may be optimal. In particular,
we describe the difficulties when the regularity assumptions are reduced to those in (\ref{jjj8}), (\ref{jjj9}) and (\ref{jjj10}). The appendix provides the
definitions of Besov spaces and other closely related tools.

\vskip .3in
\section{Proof of the existence part in Theorem \ref{main} for $\alpha\ge 1$}
\label{exi}

This section proves the existence part of Theorem \ref{main} for the case  $\alpha\ge 1$. The approach is to construct a successive approximation sequence and
show that the limit of a subsequence actually solves
(\ref{mhd}) in the weak sense.

\vskip .1in
\begin{proof}[Proof for the existence part of Theorem \ref{main} in the case when $\alpha\ge 1$]
We consider a successive approximation sequence $\{(u^{(n)}, b^{(n)})\}$
satisfying (\ref{sb}). We define the functional
setting $Y$ as in (\ref{spaceY}) . Our goal is to show
that $\{(u^{(n)}, b^{(n)})\}$ has a subsequence that converges to a weak
solution of (\ref{mhd}). This process consists of three
main steps. The first step is to show that $(u^{(n)}, b^{(n)})$ is uniformly
bounded in $Y$. The second step is to extract a strongly convergent subsequence
via the Aubin-Lions Lemma while the last step is to show that the limit is
indeed a weak solution of (\ref{mhd}). Our main effort is devoted to showing
the uniform bound for $(u^{(n)}, b^{(n)})$ in $Y$. This is proven by induction.

\vskip .1in
We show inductively that $(u^{(n)}, b^{(n)})$ is bounded uniformly in $Y$.
Recall that $(u_0, b_0)$ is in the regularity class (\ref{rr0}).
According to (\ref{sb}),
$$
u^{(1)} =\mathring S_2 u_0, \qquad b^{(1)} = \mathring S_2 b_0.
$$
Clearly,
$$
\|u^{(1)}\|_{\widetilde L^\infty(0,T; \mathring B^{\frac{d}{2}+ 1 -2\alpha}_{2,1})}
\le M, \qquad  \|b^{(1)}\|_{\widetilde L^\infty(0,T; \mathring B_{2,1}^{\frac{d}{2}+1-\alpha})}
\le M.
$$
If $T>0$ is sufficiently small, then
\beno
&& \|u^{(1)}\|_{L^1(0,T; \mathring B^{\frac{d}{2}+ 1}_{2,1})} \le T \|\mathring S_2 u_0\|_{\mathring B^{\frac{d}{2}+ 1}_{2,1}} \le T\,C\, {\|u_0\|_{\mathring B_{2,1}^{\frac{d}{2}+1-2\alpha}}}
\le \delta,\\
&& \|u^{(1)}\|_{\widetilde L^2(0,T; \mathring B^{\frac{d}{2}+ 1-\alpha}_{2,1})} \le \sqrt{T}\,C\, {\|u_0\|_{\mathring B_{2,1}^{\frac{d}{2}+1-2\alpha}}}
\le \delta.
\eeno
Assuming that $(u^{(n)}, b^{(n)})$ obeys the bounds defined
in $Y$, namely
\beno
&& \|u^{(n)}\|_{\widetilde L^\infty(0,T; \mathring B^{\frac{d}{2}+ 1 -2\alpha}_{2,1})} \le M, \qquad
\|b^{(n)}\|_{\widetilde L^\infty(0,T; \mathring B_{2,1}^{\frac{d}{2}+1-\alpha})}
\le M, \\
&& \|u^{(n)}\|_{L^1(0,T; \mathring B^{\frac{d}{2}+ 1}_{2,1})} \le \delta, \qquad
\|u^{(n)}\|_{\widetilde L^2(0,T; \mathring B^{\frac{d}{2}+ 1-\alpha}_{2,1})} \le \delta,
\eeno
we prove that $(u^{(n+1)}, b^{(n+1)})$ obeys the same bounds for sufficiently small $T>0$ and suitably
selected $\delta>0$.  For the sake of clarity, the rest of this section is divided into five subsections.

\vskip .1in
\subsection{The estimate of
$u^{(n+1)}$ in $\widetilde L^\infty(0,T; \mathring B^{1+ \frac{d}{2} -2\alpha}_{2,1}(\mathbb R^d))$}

Let $j$ be an integer. Applying $\mathring \Delta_j$ (we shall just use $\Delta_j$ to simplify the notation) to the second equation in (\ref{sb})
and then dotting with $\Delta_j u^{(n+1)}$, we obtain
\beq \label{mm}
\frac12 \frac{d}{dt} \|\Delta_j u^{(n+1)}\|_{L^2}^2 + \nu \|\Lambda^\alpha
\Delta_j u^{(n+1)}\|_{L^2}^2 = A_1 + A_2,
\eeq
where
\beno
A_1 &=& - \int \Delta_j (u^{(n)}\cdot \na u^{(n+1)}) \cdot \Delta_j  u^{(n+1)}\, dx, \\
A_2 &=& \int \Delta_j (b^{(n)}\cdot \na b^{(n)})\cdot \Delta_j  u^{(n+1)}\,dx.
\eeno
We remark that the projection operator $\mathbb P$ has been eliminated due to the divergence-free condition $\na \cdot u^{(n+1)} =0$.  The dissipative part admits a lower bound
$$
\nu \|\Lambda^\alpha
\Delta_j u^{(n+1)}\|_{L^2}^2 \ge \,C_0\, 2^{2\alpha j}\, \|\Delta_j u^{(n+1)}\|_{L^2}^2,
$$
where $C_0>0$ is a constant.  According to Lemma \ref{para}, $A_1$ can be bounded by
\beno
|A_1| &\le& C\, \|\Delta_j  u^{(n+1)}\|_{L^2}^2 \, \sum_{m\le j-1}
2^{(1+\frac{d}{2})m} \|\Delta_m u^{(n)}\|_{L^2} \\
&& +\,C\, \|\Delta_j  u^{(n+1)}\|_{L^2}\,  \|\Delta_j  u^{(n)}\|_{L^2} \, \sum_{m\le j-1}
2^{(1+\frac{d}{2})m} \|\Delta_m u^{(n+1)}\|_{L^2} \\
&& +\,C\, \|\Delta_j  u^{(n+1)}\|_{L^2}\,2^j\,  \sum_{k\ge j-1} 2^{\frac{d}{2} k} \,{\|\Delta_k u^{(n)}\|_{L^2} \,\|\widetilde{\Delta}_k u^{(n+1)}\|_{L^2}}.
\eeno
Also by Lemma \ref{para}, $A_2$ is bounded by
\beno
|A_2| &\le& C\, \|\Delta_j  u^{(n+1)}\|_{L^2} \,2^j\, \|\Delta_j  b^{(n)}\|_{L^2} \, \sum_{m\le j-1}
2^{\frac{d}{2}\,m} \|\Delta_m b^{(n)}\|_{L^2} \\
&& +\,C\, \|\Delta_j  u^{(n+1)}\|_{L^2}\,  \|\Delta_j  b^{(n)}\|_{L^2} \, \sum_{m\le j-1}
2^{(1+\frac{d}{2})m} \|\Delta_m b^{(n)}\|_{L^2} \\
&& +\,C\, \|\Delta_j  u^{(n+1)}\|_{L^2}\,2^j\,  \sum_{k\ge j-1} 2^{\frac{d}{2} k} \,\|\Delta_k b^{(n)}\|_{L^2} \,
\|\widetilde{\Delta}_k b^{(n)}\|_{L^2}.
\eeno
Inserting the estimates above in (\ref{mm}) and eliminating $ \|\Delta_j  u^{(n+1)}\|_{L^2}$ from both sides of the inequality, we obtain
\beq \label{mmm1}
\frac{d}{dt} \|\Delta_j u^{(n+1)}\|_{L^2} + C_0\, 2^{2\alpha j} \|\Delta_j u^{(n+1)}\|_{L^2} \le J_1 +\cdots + J_6,
\eeq
where
\beno
J_1 &=&  C\, \|\Delta_j  u^{(n+1)}\|_{L^2}\, \sum_{m\le j-1}
2^{(1+\frac{d}{2})m} \|\Delta_m u^{(n)}\|_{L^2}, \\
J_2 &=& \,C\, \|\Delta_j  u^{(n)}\|_{L^2} \, {\sum_{m\le j-1}}
2^{(1+\frac{d}{2})m} \|\Delta_m u^{(n+1)}\|_{L^2} \\
J_3 &=&\,C\, 2^j\,  \sum_{k\ge j-1} 2^{\frac{d}{2} k} \,\|\Delta_k u^{(n)}\|_{L^2}\, \|\widetilde{\Delta}_k u^{(n+1)}\|_{L^2}, \\
J_4 &=&\,C\, 2^j\, \|\Delta_j  b^{(n)}\|_{L^2} \, \sum_{m\le j-1}
2^{\frac{d}{2}\,m} \|\Delta_m b^{(n)}\|_{L^2}, \\
J_5 &=&\,C\,  \|\Delta_j  b^{(n)}\|_{L^2} \, \sum_{m\le j-1}
2^{(1+\frac{d}{2})m} \|\Delta_m b^{(n)}\|_{L^2}, \\
J_6 &=&\,C\, 2^j\,  \sum_{k\ge j-1} 2^{\frac{d}{2} k} \,\|\Delta_k b^{(n)}\|_{L^2} \,\|\widetilde{\Delta}_k b^{(n)}\|_{L^2}.
\eeno
Integrating (\ref{mmm1}) in time yields
\ben
\|\Delta_j u^{(n+1)}(t)\|_{L^2} &\le& e^{-C_0\, 2^{2\alpha j} t}\, \|\Delta_j u_0^{(n+1)}\|_{L^2} \notag\\
&&  + \int_0^{t} e^{-C_0\, 2^{2\alpha j}({t} -\tau)} (J_1 +\cdots+ J_6)\,d\tau. \label{pp}
\een
Taking the $L^\infty(0, T)$ of (\ref{pp}), then multiplying by $2^{(1+ \frac{d}{2} -2\alpha) j}$ and summing over $j$, we have
\beno
&& \|u^{(n+1)} \|_{\widetilde L^\infty(0, T; B^{1+ \frac{d}{2} -2\alpha}_{2,1})}  \le  \|u^{(n+1)}_0\|_{B^{1+ \frac{d}{2} -2\alpha}_{2,1}}  \notag\\
&& \qquad \qquad  + \sum_{j} 2^{(1+ \frac{d}{2} -2\alpha) j} \left\|\int_0^{t}  e^{-C_0\, 2^{2\alpha j}({t} -\tau)} (J_1 +\cdots+ J_6)\,d\tau \right\|_{L^\infty(0, T)}.
\eeno
Applying Young's inequality to the convolution in time yields
\ben
&& \|u^{(n+1)} \|_{\widetilde L^\infty(0, T; B^{1+ \frac{d}{2} -2\alpha}_{2,1})}  \le  \|u^{(n+1)}_0\|_{B^{1+ \frac{d}{2} -2\alpha}_{2,1}}  \notag\\
&& \qquad \qquad \qquad  + \sum_{j} 2^{(1+ \frac{d}{2} -2\alpha) j} \left\|J_1 +\cdots+ J_6 \right\|_{L^1(0, T)}. \label{mmmm}
\een
The terms on the right-hand side can be estimated as follows. Recalling the definition of $J_1$ and using the inductive assumption on $u^{(n)}$, we have
\beno
&& \sum_{j} 2^{(1+ \frac{d}{2} -2\alpha) j} \int_0^T
\, J_1\,d \tau  \notag \\
&=& \,C\, \int_0^T \,\sum_{j} 2^{(1+ \frac{d}{2} -2\alpha) j} \|\Delta_j  u^{(n+1)}\|_{L^2}\, \sum_{m\le j-1}
2^{(1+\frac{d}{2})m} \|\Delta_m u^{(n)}\|_{L^2}\,d\tau   \notag\\
&\le & \,C\, \|u^{(n+1)}\|_{\widetilde L^\infty(0, T; \mathring B^{1+ \frac{d}{2} -2\alpha}_{2,1})} \, \|u^{(n)}\|_{L^1(0, T; \mathring B^{1+\frac{d}{2}}_{2,1})}   \notag\\
&\le& C\, \delta \, \|u^{(n+1)}\|_{\widetilde L^\infty(0, T; \mathring B^{1+ \frac{d}{2} -2\alpha}_{2,1})}.
\eeno
The term involving $J_2$ admits the same bound. In fact,
\beno
&& \sum_{j} 2^{(1+ \frac{d}{2} -2\alpha) j} \int_0^T
\, J_2\,d \tau \notag\\
&=& \,C\, \int_0^T\,\sum_{j} 2^{(1+ \frac{d}{2})j} \|\Delta_j  u^{(n)}\|_{L^2}\, {\sum_{m\le j-1}} 2^{2\alpha(m-j)}
2^{(1+\frac{d}{2}-2\alpha)m} \|\Delta_m u^{(n+1)}\|_{L^2}\,d\tau \notag\\
&\le& \,C\, \int_0^T\,\|u^{(n)}(\tau)\|_{\mathring B^{1+\frac{d}{2}}_{2,1}}\, \|u^{(n+1)}(\tau)\|_{\mathring B^{1+\frac{d}{2}-2\alpha}_{2,1}} \, d\tau
\notag\\
&\le& \,C\, \delta \, \|u^{(n+1)}\|_{\widetilde L^\infty(0, T; \mathring B^{1+ \frac{d}{2} -2\alpha}_{2,1})},
\eeno
where we have used the fact that $2\alpha(m-j) \le 0$. The term with $J_3$ is bounded by
\beno
&& \sum_{j} 2^{(1+ \frac{d}{2} -2\alpha) j} \int_0^T
\, J_3\,d \tau\notag \\
&=& \int_0^T \sum_{j} 2^{(1+ \frac{d}{2} -2\alpha)j} \,2^j\,  \sum_{k\ge j-1} 2^{\frac{d}{2} k} \,\|\widetilde{\Delta}_k u^{(n+1)}\|_{L^2} \,\|\Delta_k u^{(n)}\|_{L^2}\, d\tau \notag\\
&=& \,C\, \int_0^{T}\,\sum_{j}  \sum_{k\ge j-1}
2^{(2+ \frac{d}{2} -2\alpha)(j-1-k)}\, 2^{(1+ \frac{d}{2}) k} \|\Delta_k u^{(n)}\|_{L^2} \,\notag\\
&&\qquad \qquad \qquad \quad \times  2^{(1+ \frac{d}{2} -2\alpha) k} \|\widetilde{\Delta}_k u^{(n+1)}\|_{L^2}\,d\tau \notag\\
&\le& \,C\,  \int_0^T\, \|u^{(n)}(\tau)\|_{{\mathring B^{1+ \frac{d}{2}}_{2,1}}} \,
\|u^{(n+1)}(\tau)\|_{\mathring B^{1+ \frac{d}{2} -2\alpha}_{2,1}}\,d\tau \notag\\
&\le& \,C\, \delta \, \|u^{(n+1)}\|_{\widetilde L^\infty(0, T; \mathring B^{1+ \frac{d}{2} -2\alpha}_{2,1})},
\eeno
where we have used Young's inequality for series convolution and the fact $(2+ \frac{d}{2} -2\alpha)(j-1-k) < 0$. This is the place where we need
$\alpha< 1+ \frac{d}{4}$.
We now estimate the terms involving $J_4$ through $J_6$. The term with $J_4$ is bounded by,
\beno
&& \sum_{j} {2^{(1+ \frac{d}{2} -2\alpha)j}} \int_0^T
\, J_4\,d \tau \notag\\
&=& \sum_{j} \int_0^T  {2^{(1+ \frac{d}{2} -2\alpha) j}}\, 2^j\, \|\Delta_j  b^{(n)}\|_{L^2} \, \sum_{m\le j-1}
2^{\frac{d}{2}\,m} \|\Delta_m b^{(n)}\|_{L^2}\, d\tau \notag\\
&=& \int_0^T \sum_{j} {2^{(1+ \frac{d}{2} -\alpha) j}}\,\|\Delta_j  b^{(n)}\|_{L^2} \, \sum_{m\le j-1} 2^{(1-\alpha) (j-m)}\, 2^{(1+ \frac{d}{2} -\alpha) m}\|\Delta_m b^{(n)}\|_{L^2}\, d\tau \notag\\
&\le& \,C\,T\, \|b^{(n)}\|_{\widetilde L^\infty(0, T; \mathring B^{1+\frac{d}{2}-\alpha}_{2,1})}\,
 \|b^{(n)}\|_{\widetilde L^\infty(0, T; \mathring B^{1+\frac{d}{2}-\alpha}_{2,1})}\\
&\le& \,C\,T\, M^2,
\eeno
where we have used the fact that $\alpha\ge 1$ and $(1-\alpha) (j-m) \le 0$.
The terms with $J_5$ and $J_6$ are estimated similarly and they obey the same bound.
Inserting the bounds above in (\ref{mmmm}), we find
\ben
\|u^{(n+1)}\|_{\widetilde L^\infty(0, T; \mathring B^{1+ \frac{d}{2} -2\alpha}_{2,1})}   &\le& \|u^{(n+1)}_0\|_{\mathring B^{1+ \frac{d}{2} -2\alpha}_{2,1}}
+  \, C\, \delta \, \|u^{(n+1)}\|_{\widetilde L^\infty(0, T; \mathring B^{1+ \frac{d}{2} -2\alpha}_{2,1})}  \notag\\
&& + \,C\,T\, M^2. \label{gggg}
\een

\vskip .1in
\subsection{The estimate of $\|b^{(n+1)}\|_{\widetilde L^\infty(0, T;\mathring  B^{\frac{d}{2} -\alpha +1}_{2,1})}$}

We use the third equation of (\ref{sb}). Applying $\Delta_j$ to the third equation in (\ref{sb})
and then dotting with $\Delta_j b^{(n+1)}$, we obtain
\beq \label{mmg}
\frac12 \frac{d}{dt} \|\Delta_j b^{(n+1)}\|_{L^2}^2 \le  B_1 + B_2,
\eeq
where
\beno
B_1 &=& - \int \Delta_j (u^{(n)}\cdot \na b^{(n+1)}) \cdot \Delta_j  b^{(n+1)}\, dx, \\
B_2 &=& \int \Delta_j (b^{(n)}\cdot \na u^{(n)})\cdot \Delta_j  b^{(n+1)}\,dx.
\eeno
By Lemma \ref{para},
\beno
|B_1| &\le& C\, \|\Delta_j  b^{(n+1)}\|^2_{L^2}\,  \sum_{m\le j-1}
2^{(1+\frac{d}{2})m} \|\Delta_m u^{(n)}\|_{L^2} \\
&& +\,C\, \|\Delta_j  b^{(n+1)}\|_{L^2}\,  \|\Delta_j  u^{(n)}\|_{L^2} \, {\sum_{m\le j-1}}
2^{(1+\frac{d}{2})m} \|\Delta_m b^{(n+1)}\|_{L^2} \\
&& +\,C\, \|\Delta_j  b^{(n+1)}\|_{L^2}\,2^j\,  \sum_{k\ge j-1} 2^{\frac{d}{2} k} \,\|\widetilde{\Delta}_k b^{(n+1)}\|_{L^2}
\,\|\Delta_k u^{(n)}\|_{L^2}
\eeno
and
\beno
|B_2| &\le& C\, \|\Delta_j  b^{(n+1)}\|_{L^2} \,2^j\, \|\Delta_j  u^{(n)}\|_{L^2} \, \sum_{m\le j-1}
2^{\frac{d}{2}\,m} \|\Delta_m b^{(n)}\|_{L^2} \\
&& +\,C\, \|\Delta_j  b^{(n+1)}\|_{L^2}\,  \|\Delta_j  b^{(n)}\|_{L^2} \, \sum_{m\le j-1 }
2^{(1+\frac{d}{2})m} \|\Delta_m u^{(n)}\|_{L^2} \\
&& +\,C\, \|\Delta_j  b^{(n+1)}\|_{L^2}\,2^j\,  \sum_{k\ge j-1} 2^{\frac{d}{2} k} \,\|\Delta_k b^{(n)}\|_{L^2} \,
\|\widetilde{\Delta}_k u^{(n)}\|_{L^2}.
\eeno
Inserting the estimates above in (\ref{mmg}) and eliminating $ \|\Delta_j  b^{(n+1)}\|_{L^2}$ from both sides of the inequality, we obtain
\beq \label{xx}
\frac{d}{dt} \|\Delta_j b^{(n+1)}\|_{L^2} \le K_1 +\cdots + K_6,
\eeq
where
\beno
K_1 &=&  C\, \|\Delta_j  b^{(n+1)}\|_{L^2}\, \sum_{m\le j-1}
2^{(1+\frac{d}{2})m} \|\Delta_m u^{(n)}\|_{L^2}, \\
K_2 &=& \,C\, \|\Delta_j  u^{(n)}\|_{L^2} \, {\sum_{m\le j-1} }
2^{(1+\frac{d}{2})m} \|\Delta_m b^{(n+1)}\|_{L^2} \\
K_3 &=&\,C\, 2^j\,  \sum_{k\ge j-1} 2^{\frac{d}{2} k} \,\|\widetilde{\Delta}_k b^{(n+1)}\|_{L^2} \,\|\Delta_k u^{(n)}\|_{L^2}, \\
K_4 &=&\,C\, 2^j\, \|\Delta_j  u^{(n)}\|_{L^2} \, \sum_{m\le j-1}
2^{\frac{d}{2}\,m} \|\Delta_m b^{(n)}\|_{L^2}, \\
K_5 &=&\,C\,  \|\Delta_j  b^{(n)}\|_{L^2} \, \sum_{m\le j-1}
2^{(1+\frac{d}{2})m} \|\Delta_m u^{(n)}\|_{L^2}, \\
K_6 &=&\,C\, 2^j\,  \sum_{k\ge j-1} 2^{\frac{d}{2} k} \,\|\Delta_k b^{(n)}\|_{L^2} \,\|\widetilde{\Delta}_k u^{(n)}\|_{L^2}.
\eeno
Integrating (\ref{xx}) in time yields,
\ben
\|\Delta_j b^{(n+1)}(t)\|_{L^2}  &\le& \, \|\Delta_j b_0^{(n+1)}\|_{L^2}  + \int_0^t \,(K_1 +\cdots+ K_6)\,d\tau. \label{lady}
\een
Taking the $L^\infty(0, T)$ of (\ref{lady}), multiplying by $2^{(\frac{d}{2}-\alpha +1)j}$ and summing over $j$, we have
\ben
&& \|b^{(n+1)}\|_{\widetilde L^\infty(0, T; \mathring B^{\frac{d}{2}-\alpha +1}_{2,1})} \le \|b^{(n+1)}_0\|_{\mathring B^{\frac{d}{2}-\alpha +1}_{2,1}} \notag \\
&& \qquad\qquad  + \sum_{j} 2^{(\frac{d}{2}-\alpha +1)j}  \int_0^T  (K_1 +\cdots+ K_6)\,d\tau.\label{good}
\een
The terms on the right can be bounded similarly as those in the previous subsection. In fact,
\beno
\sum_{j} 2^{(\frac{d}{2}-\alpha +1)j}  \int_0^T  K_1 \,d\tau  &\le& \,C\, \|b^{(n+1)}\|_{\widetilde L^\infty(0, T; \mathring B^{\frac{d}{2}+1-\alpha}_{2,1})} \, \|u^{(n)}\|_{L^1(0, T; \mathring B^{1+\frac{d}{2}}_{2,1})}   \notag\\
&\le& C\, \delta \,\|b^{(n+1)}\|_{\widetilde L^\infty(0, T; \mathring B^{\frac{d}{2}+1-\alpha}_{2,1})}.
\eeno
Similarly,
\beno
&& \sum_{j} 2^{(\frac{d}{2}-\alpha +1)j}  \int_0^T  K_2 \,d\tau \le\, C\, \delta \,\|b^{(n+1)}\|_{\widetilde L^\infty(0, T; \mathring B^{\frac{d}{2}+1-\alpha}_{2,1})},\\
&& \sum_{j} 2^{(\frac{d}{2}-\alpha +1)j}  \int_0^T  K_3 \,d\tau \le\, C\, \delta \,\|b^{(n+1)}\|_{\widetilde L^\infty(0, T; \mathring B^{\frac{d}{2}+1-\alpha}_{2,1})}.
\eeno
The terms with $K_4$, $K_5$ and $K_6$ are bounded as follows.
\beno
&& \sum_{j} 2^{(\frac{d}{2}-\alpha +1)j}  \int_0^T  K_4 \,d\tau \\
&=& \,C\, \sum_{j} 2^{(\frac{d}{2}-\alpha +1)j}  \int_0^T  2^j\, \|\Delta_j  u^{(n)}\|_{L^2} \, \sum_{m\le j-1}
2^{\frac{d}{2}\,m} \|\Delta_m b^{(n)}\|_{L^2}\,d\tau\\
&=& \,C\, \int_0^T  \sum_{j} 2^{(\frac{d}{2} +1)j} \|\Delta_j  u^{(n)}\|_{L^2} \, \sum_{m\le j-1} 2^{(1-\alpha)(j-m)}\, 2^{(\frac{d}{2}+1-\alpha)\,m} \|\Delta_m b^{(n)}\|_{L^2}\,d\tau\\
 &\le& \,C\, \|u^{(n)}\|_{L^1(0, T; \mathring B^{1+\frac{d}{2}}_{2,1})}  \,\|b^{(n)}\|_{\widetilde L^\infty(0, T; \mathring B^{\frac{d}{2}+1-\alpha}_{2,1})}  \notag\\
&\le& C\, \delta \,M,
\eeno
where we have used the fact that $\alpha\ge 1$ and $(1-\alpha) (j-m) \le 0$. Similarly,
$$
\sum_{j} 2^{(\frac{d}{2}-\alpha +1)j}  \int_0^T  K_5 \,d\tau \le  C\, \delta \,M, \quad \sum_{j} 2^{(\frac{d}{2}-\alpha +1)j}  \int_0^T  K_6 \,d\tau \le  C\, \delta \,M.
$$
Inserting the estimates above in (\ref{good}), we find
\ben
\|b^{(n+1)}\|_{\widetilde L^\infty(0, T; \mathring B^{\frac{d}{2}-\alpha +1}_{2,1})} &\le& \|b^{(n+1)}_0\|_{\mathring B^{\frac{d}{2}-\alpha +1}_{2,1}}
+ C\, \delta \,\|b^{(n+1)}\|_{\widetilde L^\infty(0, T; \mathring B^{\frac{d}{2}+1-\alpha}_{2,1})} \notag\\
&& \,+ \,C\, \delta \,M. \label{bes}
\een

\vskip .1in
\subsection{The estimate of $\|u^{(n+1)}\|_{L^1\left(0, T;\,\mathring B^{1+\frac{d}{2}}_{2,1}\right)}$}

 We multiply (\ref{pp}) by $2^{(1+\frac{d}{2})j}$, sum over $j$ and integrate in time to obtain
\beno
\|u^{(n+1)}\|_{L^1\left(0, T;\mathring B^{1+\frac{d}{2}}_{2,1}\right)} &\le& \int_0^T \sum_{j} 2^{(1+\frac{d}{2}) j}\, e^{-C_0\, 2^{2\alpha j} t}\, \|\Delta_j u_0^{(n+1)}\|_{L^2}\,dt
\\
&&  + \int_0^T \sum_{j} 2^{(1+\frac{d}{2}) j}\, \int_0^s e^{-C_0\, 2^{2\alpha j}(s-\tau)} (J_1 +\cdots+ J_6)\,d\tau\, ds.
\eeno
We estimate the terms on the right and start with the first term.
\beno
&&\int_0^T \sum_{j} 2^{(1+\frac{d}{2}) j}\, e^{-C_0\, 2^{2\alpha j} t}\, \|\Delta_j u_0^{(n+1)}\|_{L^2}\,dt  \notag\\
&=& C\, \sum_{j} 2^{(1+\frac{d}{2}-2\alpha) j} \left(1- e^{-C_0\, 2^{2\alpha j} T}\right)\, \|\Delta_j u_0^{(n+1)}\|_{L^2}.
\eeno
Since $u_0 \in \mathring B^{1+\frac{d}{2}-2\alpha}_{2,1}$, it follows from the Dominated Convergence Theorem that
$$
\lim_{T\to 0}\,  \sum_{j} 2^{(1+\frac{d}{2}-2\alpha) j} \left(1- e^{-C_0\, 2^{2\alpha j} T}\right)\, \|\Delta_j u_0^{(n+1)}\|_{L^2} = 0.
$$
Therefore, we can choose $T$ sufficiently small such that
$$
\int_0^T \sum_{j} 2^{(1+\frac{d}{2}) j}\, e^{-C_0\, 2^{2\alpha j} t}\, \|\Delta_j u_0^{(n+1)}\|_{L^2}\,dt \le \frac{\delta}{4}.
$$
Applying Young's inequality for the time convolution, we have
\beno
&& \int_0^T \sum_{j} 2^{(1+\frac{d}{2}) j}\, \int_0^s e^{-C_0\, 2^{2\alpha j}(s-\tau)} J_1\,d \tau\,ds \\
&=& \,C\, \int_0^T  \sum_{j} 2^{(1+\frac{d}{2}) j}\, \int_0^s e^{-C_0\, 2^{2\alpha j}(s-\tau)} \|\Delta_j  u^{(n+1)}(\tau)\|_{L^2}\, \\
&& \qquad \qquad \qquad \qquad \qquad \times \sum_{m\le j-1}
2^{(1+\frac{d}{2})m} \|\Delta_m u^{(n)}(\tau)\|_{L^2}\,d\tau \,ds \\
&\le& \,C\, \int_0^T \sum_{j} 2^{(1+\frac{d}{2}) j}\,\|\Delta_j  u^{(n+1)}\|_{L^2}\, \sum_{m\le j-1}
2^{(1+\frac{d}{2})m} \|\Delta_m u^{(n)}\|_{L^2}\, d\tau \\
&& \qquad\qquad \qquad  \times
\int_{0}^T
e^{-C_0\, 2^{2\alpha j}s} ds\,d\tau\\
&\le& \,C\, \int_0^T \sum_{j} 2^{(1+\frac{d}{2}-\alpha) j}\,\|\Delta_j  u^{(n+1)}(\tau)\|_{L^2}\, \\
&& \qquad\qquad   \times \sum_{m\le j-1} 2^{(m-j)\alpha}\,
2^{(1+\frac{d}{2}-\alpha)m} \|\Delta_m u^{(n)}(\tau)\|_{L^2} \,d\tau \\
&\le& \,C\,\|u^{(n)}\|_{\widetilde L^2(0, T;\mathring B_{2,1}^{1+\frac{d}{2}-\alpha})}\, \|u^{(n+1)}\|_{\widetilde L^2(0, T;\mathring B_{2,1}^{1+\frac{d}{2}-\alpha})} \\
&\le& \,C\, \delta \, \|u^{(n+1)}\|_{\widetilde L^2(0, T; \mathring B_{2,1}^{1+\frac{d}{2}-\alpha})}.
\eeno
The terms with $J_2$ and $J_3$ can be estimated similarly and they share the same upper bound with the term involving $J_1$,
\beno
&& \int_0^T \sum_{j} 2^{(1+\frac{d}{2}) j}\, \int_0^s e^{-C_0\, 2^{2\alpha j}(s-\tau)} J_2\,d \tau\,ds  \le \,C\, \delta \, \|u^{(n+1)}\|_{\widetilde L^2(0, T; \mathring B_{2,1}^{1+\frac{d}{2}-\alpha})},  \\
&& \int_0^T \sum_{j} 2^{(1+\frac{d}{2}) j}\, \int_0^s e^{-C_0\, 2^{2\alpha j}(s-\tau)} J_3\,d \tau\,ds  \le \,C\, \delta \, \|u^{(n+1)}\|_{\widetilde L^2(0, T; \mathring B_{2,1}^{1+\frac{d}{2}-\alpha})}.
\eeno
We now examine the terms involving $J_4$ through $J_6$. Again by Young's inequality,
\beno
&& \int_0^T \sum_{j} 2^{(1+\frac{d}{2}) j}\, \int_0^s e^{-C_0\, 2^{2\alpha j}(s-\tau)} J_4\,d \tau\,ds \\
&=& \,C\, \int_0^T  \sum_{j} 2^{(1+\frac{d}{2}) j}\, \int_0^s e^{-C_0\, 2^{2\alpha j}(s-\tau)} 2^j\, \|\Delta_j  b^{(n)}(\tau)\|_{L^2} \,\\
&& \qquad\qquad\qquad\qquad \quad \times  \sum_{m\le j-1}
2^{\frac{d}{2}\,m} \|\Delta_m b^{(n)}(\tau)\|_{L^2} \,d \tau\,ds \\
&\le& \,C\,\int_0^T  \sum_{j} 2^{(\frac{d}{2} + 2-2\alpha) j}\,\|\Delta_j  b^{(n)}(\tau)\|_{L^2} \, \sum_{m\le j-1}
2^{\frac{d}{2}\,m} \|\Delta_m b^{(n)}(\tau)\|_{L^2} \,d\tau \\
&\le& \,C\,\int_0^T  \sum_{j} 2^{(\frac{d}{2} + 1-\alpha) j}\,\|\Delta_j  b^{(n)}(\tau)\|_{L^2} \, \sum_{m\le j-1} 2^{(1-\alpha)(j-m)}\,
2^{(\frac{d}{2}+1-\alpha)\,m} \|\Delta_m b^{(n)}(\tau)\|_{L^2} \,d\tau \\
&\le& \,C\, \int_0^T  \|b^{(n)}(\tau)\|^2_{\mathring B_{2,1}^{1+\frac{d}{2}-\alpha}}\,
d\tau\\
&\le & \,C\,T\, \|b^{(n)}\|^2_{\widetilde L^\infty(0, T; \mathring B_{2,1}^{1+\frac{d}{2}-\alpha})} \le \, C\, T\,M^2,
\eeno
where we have used the fact that $\alpha\ge 1$ and $(1-\alpha) (j-m)\le 0$ again.  The other two terms involving  $J_5$ and $J_6$ obey the same bound,
\beno
&& \int_0^T \sum_{j} 2^{(1+\frac{d}{2}) j}\, \int_0^s e^{-C_0\, 2^{2\alpha j}(s-\tau)} J_5\,d \tau\,ds  \le \, C\, T\, M^2,\\
&& \int_0^T \sum_{j} 2^{(1+\frac{d}{2}) j}\, \int_0^s e^{-C_0\, 2^{2\alpha j}(s-\tau)} J_6\,d \tau\,ds  \le \,\, C\, T\, M^2.
\eeno
Here we have used $\alpha<1+\frac{d}{4}$ in the estimate of $J_6$. Collecting the estimates above leads to
\ben
\|u^{(n+1)}\|_{L^1\left(0, T; \mathring B^{1+\frac{d}{2}}_{2,1}\right)} \le \frac{\delta}{4}  + C\,\delta  \, \|u^{(n+1)}\|_{\widetilde L^2(0, T; \mathring B_{2,1}^{1+\frac{d}{2}-\alpha})} + \, C\, T\,M^2.\label{ppo}
\een

\vskip .1in
\subsection{The bound for  $\|u^{(n+1)}\|_{\widetilde L^2(0, T; \mathring B_{2,1}^{1+\frac{d}{2}-\alpha})}$}

We multiply (\ref{pp}) by $2^{(1+\frac{d}{2}-\alpha)j}$, take the $L^2(0, T)$-norm
and sum over $j$  to obtain
\ben
&& \|u^{(n+1)}\|_{\widetilde L^2(0, T; \mathring B_{2,1}^{1+\frac{d}{2}-\alpha})} \le \sum_{j} 2^{(1+\frac{d}{2}-\alpha) j}\, \left\|e^{-C_0\, 2^{2\alpha j} t}\, \|\Delta_j u_0^{(n+1)}\|_{L^2} \,\right\|_{L^2(0,T)}
\notag\\
&& \qquad\quad + \sum_{j} 2^{(1+\frac{d}{2}-\alpha) j}\, \left\|\int_0^s e^{-C_0\, 2^{2\alpha j}(s-\tau)} (J_1 +\cdots+ J_6)\,d\tau\,\right\|_{L^2(0,T)}.
\label{xxy}
\een
The first term on the right is bound by
\beno
&& \sum_{j} 2^{(1+\frac{d}{2}-\alpha) j}\, \left\|e^{-C_0\, 2^{2\alpha j} t}\, \|\Delta_j u_0^{(n+1)}\|_{L^2} \,\right\|_{L^2(0,T)}\notag\\
&=& C\, \sum_{j} 2^{(1+\frac{d}{2}-2\alpha) j} \left(1- e^{-2 C_0\, 2^{2\alpha j} T}\right)^{\frac12}\, \|\Delta_j u_0^{(n+1)}\|_{L^2}.
\eeno
Since $u_0 \in \mathring B^{1+\frac{d}{2}-2\alpha}_{2,1}$, it follows from the Dominated Convergence Theorem that
$$
\lim_{T\to 0}\,  \sum_{j} 2^{(1+\frac{d}{2}-2\alpha) j} \left(1- e^{-2C_0\, 2^{2\alpha j} T}\right)^{\frac12}\, \|\Delta_j u_0^{(n+1)}\|_{L^2} = 0.
$$
Therefore we can choose $T>0$ sufficiently small such that
$$
\sum_{j} 2^{(1+\frac{d}{2}-\alpha) j}\, \left\|e^{-C_0\, 2^{2\alpha j} t}\, \|\Delta_j u_0^{(n+1)}\|_{L^2} \,\right\|_{L^2(0,T)} \le \frac{\delta}{4}.
$$
The other six terms on the right of (\ref{xxy}) are estimated as follows.
Applying Young's inequality for the time convolution, we have
\beno
&& \sum_{j} 2^{(1+\frac{d}{2}-\alpha) j}\, \left\|\int_0^s e^{-C_0\, 2^{2\alpha j}(s-\tau)} J_1\,d\tau\,\right\|_{L^2(0,T)} \\
&=& \,C\, \sum_{j} 2^{(1+\frac{d}{2}-\alpha) j}\, \Big\|\int_0^s e^{-C_0\, 2^{2\alpha j}(s-\tau)} \|\Delta_j  u^{(n+1)}(\tau)\|_{L^2}\, \\
&& \qquad \qquad \qquad \qquad \qquad \times \sum_{m\le j-1}
2^{(1+\frac{d}{2})m} \|\Delta_m u^{(n)}(\tau)\|_{L^2}\,d\tau\Big\|_{L^2(0,T)} \\
&\le& \,C\,\sum_{j} 2^{(1+\frac{d}{2}-\alpha) j}\,\|e^{-C_0\, 2^{2\alpha j}s}\|_{L^2(0,T)} \\
&&\qquad\qquad\qquad \times\,\Big\|\|\Delta_j  u^{(n+1)}(\tau)\|_{L^2} \sum_{m\le j-1}
2^{(1+\frac{d}{2})m} \|\Delta_m u^{(n)}(\tau)\|_{L^2}\Big\|_{L^1(0, T)}\\
&\le& \,C\, \int_0^T \sum_{j} 2^{(1+\frac{d}{2}-2\alpha) j}\,\|\Delta_j  u^{(n+1)}\|_{L^2}\, \sum_{m\le j-1}
2^{(1+\frac{d}{2})m} \|\Delta_m u^{(n)}\|_{L^2}\, d\tau \\
&\le& \,C\, \int_0^T \sum_{j} 2^{(1+\frac{d}{2}-\alpha) j}\,\|\Delta_j  u^{(n+1)}(\tau)\|_{L^2}\, \\
&& \qquad\qquad   \times \sum_{m\le j-1} 2^{(m-j)\alpha}\,
2^{(1+\frac{d}{2}-\alpha)m} \|\Delta_m u^{(n)}(\tau)\|_{L^2} \,d\tau \\
&\le& \,C\,\|u^{(n)}\|_{\widetilde L^2(0, T;\mathring B_{2,1}^{1+\frac{d}{2}-\alpha})}\, \|u^{(n+1)}\|_{\widetilde L^2(0, T;\mathring B_{2,1}^{1+\frac{d}{2}-\alpha})} \\
&\le& \,C\, \delta \, \|u^{(n+1)}\|_{\widetilde L^2(0, T; \mathring B_{2,1}^{1+\frac{d}{2}-\alpha})}.
\eeno
The terms with $J_2$ and $J_3$ share the same upper bound,
\beno
&&\sum_{j} 2^{(1+\frac{d}{2}-\alpha) j}\, \left\|\int_0^s e^{-C_0\, 2^{2\alpha j}(s-\tau)} J_2\,d\tau\,\right\|_{L^2(0,T)}  \le \,C\, \delta \, \|u^{(n+1)}\|_{\widetilde L^2(0, T; \mathring B_{2,1}^{1+\frac{d}{2}-\alpha})},  \\
&& \sum_{j} 2^{(1+\frac{d}{2}-\alpha) j}\, \left\|\int_0^s e^{-C_0\, 2^{2\alpha j}(s-\tau)} J_3\,d\tau\,\right\|_{L^2(0,T)} \le \,C\, \delta \, \|u^{(n+1)}\|_{\widetilde L^2(0, T; \mathring B_{2,1}^{1+\frac{d}{2}-\alpha})}.
\eeno
The estimate of the term with $J_4$ is similar. Again by Young's inequality,
\beno
&& \sum_{j} 2^{(1+\frac{d}{2}-\alpha) j}\, \left\|\int_0^s e^{-C_0\, 2^{2\alpha j}(s-\tau)} J_4\,d\tau\,\right\|_{L^2(0,T)}  \\
&=& \,C\, \sum_{j} 2^{(1+\frac{d}{2}-\alpha) j}\, \Big\|\int_0^s e^{-C_0\, 2^{2\alpha j}(s-\tau)} 2^j\, \|\Delta_j  b^{(n)}(\tau)\|_{L^2} \,\\
&& \qquad\qquad\qquad\qquad \quad \times  \sum_{m\le j-1}
2^{\frac{d}{2}\,m} \|\Delta_m b^{(n)}(\tau)\|_{L^2} \,d \tau\,ds\Big\|_{L^2(0,T)} \\
&\le& \,C\, \sum_{j} 2^{(\frac{d}{2} + 2-2\alpha) j}\,\Big\|\|\Delta_j  b^{(n)}(\tau)\|_{L^2} \, \sum_{m\le j-1}
2^{\frac{d}{2}\,m} \|\Delta_m b^{(n)}(\tau)\|_{L^2}\Big\|_{L^1(0,T)} \\
&\le& \,C\,\int_0^T  \sum_{j} 2^{(\frac{d}{2} + 1-\alpha) j}\,\|\Delta_j  b^{(n)}(\tau)\|_{L^2} \,\\
&&\qquad\qquad\times \sum_{m\le j-1} 2^{(1-\alpha)(j-m)}\,
2^{(\frac{d}{2}+1-\alpha)\,m} \|\Delta_m b^{(n)}(\tau)\|_{L^2} \,d\tau \\
&\le& \,C\, \int_0^T  \|b^{(n)}(\tau)\|^2_{\mathring B_{2,1}^{1+\frac{d}{2}-\alpha}}\,
d\tau\\
&\le& \,C\,T\, \|b^{(n)}\|^2_{\widetilde L^\infty(0, T; \mathring B_{2,1}^{1+\frac{d}{2}-\alpha})} \le \, C\, T\,M^2.
\eeno
The other two terms involving  $J_5$ and $J_6$ obey the same bound,
\beno
&& \sum_{j} 2^{(1+\frac{d}{2}-\alpha) j}\, \left\|\int_0^s e^{-C_0\, 2^{2\alpha j}(s-\tau)} J_5\,d\tau\,\right\|_{L^2(0,T)}  \le \, C\, T\, M^2,\\
&&\sum_{j} 2^{(1+\frac{d}{2}-\alpha) j}\, \left\|\int_0^s e^{-C_0\, 2^{2\alpha j}(s-\tau)} J_6\,d\tau\,\right\|_{L^2(0,T)}   \le \,\, C\, T\, M^2.
\eeno
Collecting the estimates above leads to
\ben
\|u^{(n+1)}\|_{\widetilde L^2(0, T; \mathring B_{2,1}^{1+\frac{d}{2}-\alpha})} \le \frac{\delta}{4}  + C\,\delta  \, \|u^{(n+1)}\|_{\widetilde L^2(0, T; \mathring B_{2,1}^{1+\frac{d}{2}-\alpha})} + \, C\, T\,M^2.\label{dj8}
\een

\vskip .1in
\subsection{Completion of the proof for the existence part in the case when $\alpha\ge 1$}

The bounds in (\ref{gggg}), (\ref{bes}), (\ref{ppo}) and (\ref{dj8}) allow us to
conclude that, if we choose $T>0$ sufficiently small and $\delta>0$ suitably, then
\beno
&& \|u^{(n+1)}\|_{\widetilde L^\infty(0,T; \mathring B^{\frac{d}{2}+ 1 -2\alpha}_{2,1})} \le M, \qquad
\|b^{(n+1)}\|_{\widetilde L^\infty(0,T; \mathring B_{2,1}^{\frac{d}{2}+1-\alpha})}
\le M, \\
&& \|u^{(n+1)}\|_{L^1(0,T; \mathring B^{\frac{d}{2}+ 1}_{2,1})} \le \delta, \qquad
\|u^{(n+1)}\|_{\widetilde L^2(0,T; \mathring B^{\frac{d}{2}+ 1-\alpha}_{2,1})} \le \delta.
\eeno
In fact, if $T$ and $\delta$ in (\ref{gggg}) satisfy
$$
C\delta \le \frac14, \qquad C\,  T\,M \le \frac14,
$$
then (\ref{gggg}) implies
$$
\|u^{(n+1)}\|_{\widetilde L^\infty(0,T; \mathring B^{\frac{d}{2}+ 1 -2\alpha}_{2,1})} \le \frac12 M + \frac14 \|u^{(n+1)}\|_{\widetilde L^\infty(0,T; \mathring B^{\frac{d}{2}+ 1 -2\alpha}_{2,1})} + \frac14 M
$$
or
$$
\|u^{(n+1)}\|_{\widetilde L^\infty(0,T; \mathring B^{\frac{d}{2}+ 1 -2\alpha}_{2,1})}  \le M.
$$
Similarly, if $C\delta \le \frac14$ in (\ref{bes}), then (\ref{bes}) states
$$
\|b^{(n+1)}\|_{\widetilde L^\infty(0,T; \mathring B_{2,1}^{\frac{d}{2}+1-\alpha})}
\le M.
$$
According to  (\ref{dj8}), if we choose $C\delta \le \frac14$ and $C\, T\,M^2 \le \frac12 \delta$, then
$$
\|u^{(n+1)}\|_{\widetilde L^2(0,T; \mathring B^{\frac{d}{2}+ 1-\alpha}_{2,1})} \le \delta
$$
and consequently, if $C\delta \le \frac14$ and $C\, T\,M^2\le \frac12 \delta$ in (\ref{ppo}), then
$$
\|u^{(n+1)}\|_{\widetilde L^1(0,T; \mathring B^{\frac{d}{2}+ 1}_{2,1})} \le \delta.
$$
These uniform bounds allow us to extract a weakly convergent subsequence. That is,
there is $(u, b)\in Y$ such that a subsequence of $(u^{(n)}, b^{(n)})$ (still denoted by $(u^{(n)}, b^{(n)})$) satisfies
\beno
&& u^{(n)}  \overset{\ast}{\rightharpoonup} u \quad \mbox{in}\quad \widetilde L^\infty(0,T; \mathring B^{\frac{d}{2}+ 1 -2\alpha}_{2,1}), \\
&& b^{(n)}  \overset{\ast}{\rightharpoonup} b \quad \mbox{in}\quad \widetilde L^\infty(0,T; \mathring B_{2,1}^{\frac{d}{2}+1-\alpha}).
\eeno
In order to show that $(u, b)$ is a weak solution of (\ref{mhd}), we need to further
extract a subsequence which converges strongly to $(u, b)$. This is done via the Aubin-Lions Lemma. We can show by making use of the equations in (\ref{sb}) that $(\p_t u^{(n)},\p_t b^{(n)})$ is {uniformly} bounded in
\beno
&& \p_t u^{(n)} \in  L^1(0, T; \mathring B^{\frac{d}{2}-2\alpha +1}_{2,1}) \cap \widetilde L^2(0, T; \mathring B^{\frac{d}{2}+1-3\alpha}_{2,1}), \\
&& \p_t b^{(n)}\in L^2(0, T; \mathring B^{\frac{d}{2}+1 -2\alpha}_{2,1})
\eeno
Since we are in the case of the whole space $\mathbb R^d$, we need to combine Cantor's diagonal process with the Aubin-Lions Lemma to show that a subsequence of the weakly convergent subsequence, still denoted by $(u^{(n)}, b^{(n)})$, has the following strongly convergent property,
\beno
(u^{(n)}, b^{(n)}) \to (u, b) \qquad \mbox{in} \quad L^2(0, T; \mathring B^\gamma_{2,1}(Q)),
\eeno
where $\frac{d}{2}+1 - 2\alpha \le \gamma < \frac{d}{2} + 1 -\alpha$ and $Q\subset \mathbb R^d$ is any compact subset. This strong convergence property would allow us to show that $(u, b)$ is indeed a weak solution of (\ref{mhd}). This process is routine and we omit the details. This completes the proof for the existence part of Theorem \ref{main}
in the case when $\alpha\ge 1$.
\end{proof}

\vskip .3in
\section{Proof of the existence part in Theorem \ref{main} with $\alpha< 1$}
\label{exi2}

This section proves the existence part of Theorem \ref{main} for the case when $\alpha< 1$. The idea is still to construct a successive approximation sequence and
show that the limit of a subsequence actually solves
(\ref{mhd}) in the weak sense. Some of the technical approaches here are different from those for $\alpha\ge 1$.

\vskip .1in
\begin{proof}[Proof for the existence part of Theorem \ref{main} in the case when $\alpha< 1$]
	We consider a successive approximation sequence $\{(u^{(n)}, b^{(n)})\}$
	satisfying (\ref{sb1}). We define the functional
	setting $Y$ as in (\ref{spaceY1}). Our goal is to show
	that $\{(u^{(n)}, b^{(n)})\}$ has a subsequence that converges to the weak
	solution of (\ref{mhd}). This process consists of three
	main steps. The first step is to show that $(u^{(n)}, b^{(n)})$ is uniformly
	bounded in $Y$. The second step is to extract a strongly convergent subsequence
	via the Aubin-Lions Lemma while the last step is to show that the limit is
	indeed a weak solution of (\ref{mhd}). Our main effort is devoted to showing
	the uniform bound for $(u^{(n)}, b^{(n)})$ in $Y$. This is proven by induction.
    We start with the basis step. Recall that $(u_0, b_0)$ is in the regularity class (\ref{rrr}).
    According to (\ref{sb1}),
    $$
    u^{(1)} =S_2 u_0, \qquad b^{(1)} = S_2 b_0.
    $$
    Clearly,
    $$
    \|u^{(1)}\|_{\widetilde L^\infty(0,T;  B^{\sigma}_{2,\infty})}
    \le M, \qquad  \|b^{(1)}\|_{\widetilde L^\infty(0,T;  B_{2,\infty}^{\sigma})}
    \le M.
    $$
    If $T>0$ is sufficiently small, then
    \beno
    && \|u^{(1)}\|_{\widetilde L^2(0,T; B^{\alpha + \sigma}_{2,\infty})} \le \sqrt T \|S_2 u_0\|_{B^{\alpha + \sigma}_{2,\infty}} \le \sqrt T\,C\, {\|u_0\|_{ B_{2,\infty}^{\sigma}}}
    \le M.
    \eeno
    Assuming that $(u^{(n)}, b^{(n)})$ obeys the bounds defined
    in $Y$, namely
    \beno
    \|u^{(n)}\|_{\widetilde L^\infty(0,T;  B^{\sigma}_{2,\infty})}\le M, \quad
    \|b^{(n)}\|_{\widetilde L^\infty(0,T;  B^{\sigma}_{2,\infty})}
    \le M,  \quad \|u^{(n)}\|_{\widetilde L^2(0,T; B^{\alpha + \sigma}_{2,\infty})} \le M,
    \eeno
    we prove that $(u^{(n+1)}, b^{(n+1)})$ obeys the same bound for sufficiently small $T>0$.

\vskip .1in
The proof involves inhomogeneous dyadic block operator $\Delta_j$ and  the inhomogeneous Besov spaces. Let $j\ge 0$ be an integer.
Applying $\Delta_j$ to the second and third equations in (\ref{sb1}) and
then dotting
by $(\Delta_j u^{(n+1)}, \Delta_j b^{(n+1)})$, we have
\beq\label{xj8}
 \frac{d}{dt} \left(\|\Delta_j u^{(n+1)}\|_{L^2}^2 + \|\Delta_j b^{(n+1)}\|_{L^2}^2\right)  + C_0\, 2^{2\alpha j}\, \|
\Delta_j u^{(n+1)}\|_{L^2}^2\le  E_1 + E_2 + E_3,
\eeq
where $C_0>0$ is constant and
\beno
&& E_1 = - 2 \int \Delta_j (u^{(n)}\cdot \na u^{(n+1)}) \cdot \Delta_j  u^{(n+1)}\, dx,\\
&& E_2 = - 2 \int \Delta_j (u^{(n)}\cdot \na b^{(n+1)}) \cdot \Delta_j  b^{(n+1)}\, dx,\\
&& E_3 = 2 \int \Delta_j (b^{(n)}\cdot \na b^{(n+1)})\cdot \Delta_j  u^{(n+1)}\,dx
+ \int \Delta_j (b^{(n)}\cdot \na u^{(n+1)})\cdot \Delta_j  b^{(n+1)}\,dx.
\eeno
According to Lemma \ref{para}, $E_1$ is bounded by
\beno
|E_1| &\le& C\, \|\Delta_j  u^{(n+1)}\|_{L^2}^2 \, \sum_{m\le j-1}
2^{(1+\frac{d}{2})m} \|\Delta_m u^{(n)}\|_{L^2} \notag\\
&& +\,C\, \|\Delta_j  u^{(n+1)}\|_{L^2}\,  \|\Delta_j  u^{(n)}\|_{L^2} \, \sum_{m\le j-1}
2^{(1+\frac{d}{2})m} \|\Delta_m u^{(n+1)}\|_{L^2} \notag\\
&& +\,C\, \|\Delta_j  u^{(n+1)}\|_{L^2}\,2^j\,  \sum_{k\ge j-1} 2^{\frac{d}{2} k} \,{\|\Delta_k u^{(n)}\|_{L^2} \,\|\widetilde{\Delta}_k u^{(n+1)}\|_{L^2}}\notag\\
&:=& L_1 + L_2 + L_3.
\eeno
$E_2$ is bounded  by
\beno
|E_2| &\le& C\, \|\Delta_j  b^{(n+1)}\|^2_{L^2}\,  \sum_{m\le j-1}
2^{(1+\frac{d}{2})m} \|\Delta_m u^{(n)}\|_{L^2} \\
&& +\,C\, \|\Delta_j  b^{(n+1)}\|_{L^2}\,  \|\Delta_j  u^{(n)}\|_{L^2} \, {\sum_{m\le j-1}}
2^{(1+\frac{d}{2})m} \|\Delta_m b^{(n+1)}\|_{L^2} \\
&& +\,C\, \|\Delta_j  b^{(n+1)}\|_{L^2}\,2^j\,  \sum_{k\ge j-1} 2^{\frac{d}{2} k} \,\|\widetilde{\Delta}_k b^{(n+1)}\|_{L^2}
\,\|\Delta_k u^{(n)}\|_{L^2}\\
&:=& L_4 +L_5 +L_6.
\eeno
$E_3$  is bounded by
\beno
|E_3| &\le& C\, \|\Delta_j  u^{(n+1)}\|_{L^2} \,\|\Delta_j  b^{(n+1)}\|_{L^2}\, \sum_{m\le j-1}
2^{(1+\frac{d}{2})m} \|\Delta_m b^{(n)}\|_{L^2} \\
&& +\,C\, \|\Delta_j  u^{(n+1)}\|_{L^2}\,  \|\Delta_j  b^{(n)}\|_{L^2} \, \sum_{m\le j-1}
2^{(1+\frac{d}{2})m} \|\Delta_m b^{(n+1)}\|_{L^2} \\
&& +\,C\, \|\Delta_j  u^{(n+1)}\|_{L^2}\,2^j\,  \sum_{k\ge j-1} 2^{\frac{d}{2} k} \,{\|\Delta_k b^{(n)}\|_{L^2} \,\|\widetilde{\Delta}_k b^{(n+1)}\|_{L^2}}\\
&& +\,C\, \|\Delta_j  b^{(n+1)}\|_{L^2}\,  \|\Delta_j  b^{(n)}\|_{L^2} \, {\sum_{m\le j-1}}
2^{(1+\frac{d}{2})m} \|\Delta_m u^{(n+1)}\|_{L^2} \\
&& +\,C\, \|\Delta_j  b^{(n+1)}\|_{L^2}\,2^j\,  \sum_{k\ge j-1} 2^{\frac{d}{2} k} \,\|\widetilde{\Delta}_k u^{(n+1)}\|_{L^2}
\,\|\Delta_k b^{(n)}\|_{L^2}\\
&:=&L_7 +L_8+L_9+L_{10}+L_{11}.
\eeno
Inserting these bounds in (\ref{xj8}) and then integrating in time yield
\ben
&& \|\Delta_j u^{(n+1)}\|_{L^2}^2 + \|\Delta_j b^{(n+1)}\|_{L^2}^2 +  C_0\, 2^{2\alpha j}\, \int_0^t \|
\Delta_j u^{(n+1)}\|_{L^2}^2\,d\tau  \notag\\
&\le&   \|\Delta_j u_0^{(n+1)}\|_{L^2}^2 + \|\Delta_j b_0^{(n+1)}\|_{L^2}^2 + \int_0^t (L_1 +\cdots + L_{11})\, d\tau. \label{xjp}
\een
Taking $L^\infty(0, T)$ of (\ref{xjp}), then multiplying by $2^{2 \sigma j}$ and
taking the sup in $j$ yield
\ben
&& \|u^{(n+1)}\|^2_{\widetilde L^\infty(0, T; B^{\sigma}_{2,\infty})} + \|b^{(n+1)}\|^2_{\widetilde L^\infty(0, T; B^{\sigma}_{2,\infty})} + C_0
\|u^{(n+1)}\|^2_{\widetilde L^2(0, T; B^{\sigma + \alpha}_{2,\infty})} \notag\\
&\le& \|u_0\|^2_{B^{\sigma}_{2,\infty}} + \|b_0\|^2_{B^{\sigma}_{2,\infty}}
+ \sup_{j} \,2^{2 \sigma j} \int_0^T (L_1 +\cdots +L_{11})\,d\tau.  \label{xjpp}
\een
We now estimate the eleven terms on the right. By H\"{o}lder's inequality,
\beno
&& \sup_{j} \,2^{2 \sigma j} \int_0^T L_1 \,d\tau \notag\\
&=& C\, \sup_{j} \,2^{2 \sigma j} \int_0^T \|\Delta_j  u^{(n+1)}\|_{L^2}^2 \, \sum_{m\le j-1}
2^{(1+\frac{d}{2})m} \|\Delta_m u^{(n)}\|_{L^2}\,d\tau \notag\\
&\le& C\, \|u^{(n+1)}\|^2_{\widetilde L^\infty(0, T; B^{\sigma}_{2,\infty})}
\sup_{j}  \sum_{m\le j-1}
2^{(1+\frac{d}{2}-(\alpha + \sigma))m} \, \int_0^T 2^{(\alpha + \sigma)m}\|\Delta_m u^{(n)}\|_{L^2}\,d\tau  \notag\\
&\le& C\, \|u^{(n+1)}\|^2_{\widetilde L^\infty(0, T; B^{\sigma}_{2,\infty})}
 \sum_{m\le j-1}
2^{(1+\frac{d}{2}-(\alpha + \sigma))m} \, \sqrt{T}\, \|2^{(\alpha + \sigma)m}\|\Delta_m u^{(n)}\|_{L^2}\|_{L^2(0,T)}  \notag\\
&\le& C\, \|u^{(n+1)}\|^2_{\widetilde L^\infty(0, T; B^{\sigma}_{2,\infty})}\,
\sqrt{T}\,\sup_{m} \|2^{(\alpha + \sigma)m}\|\Delta_m u^{(n)}\|_{L^2}\|_{L^2(0,T)}\notag\\
&=& C\,\sqrt{T}\, \|u^{(n)}\|_{\widetilde L^2(0, T; B^{\sigma + \alpha}_{2,\infty})}\, \|u^{(n+1)}\|^2_{\widetilde L^\infty(0, T; B^{\sigma}_{2,\infty})}\notag\\
& \le & C\,\sqrt{T}\, M \,\|u^{(n+1)}\|^2_{\widetilde L^\infty(0, T; B^{\sigma}_{2,\infty})},
\eeno
where we have used the fact that $\alpha + \sigma> 1+ \frac{d}{2}$ and we are working with inhomogeneous dyadic blocks.
The terms with $L_2$, $L_3$ and $L_4$ can be bounded very similarly and the bounds for them are
\beno
&& \sup_{j} \,2^{2 \sigma j} \int_0^T L_2 \,d\tau \le \, C\,\sqrt{T}\, M\, \,\|u^{(n+1)}\|_{\widetilde L^\infty(0, T; B^{\sigma}_{2,\infty})}\, \|u^{(n+1)}\|_{\widetilde L^2(0, T; B^{\sigma + \alpha}_{2,\infty})}, \\
&&  \sup_{j} \,2^{2 \sigma j} \int_0^T L_3 \,d\tau \le \, C\,\sqrt{T}\, M \,\|u^{(n+1)}\|^2_{\widetilde L^\infty(0, T; B^{\sigma}_{2,\infty})},\\
&& \sup_{j} \,2^{2 \sigma j} \int_0^T L_4 \,d\tau \le \, C\,\sqrt{T}\, M \,\|b^{(n+1)}\|^2_{\widetilde L^\infty(0, T; B^{\sigma}_{2,\infty})}.
\eeno
The term with $L_5$ is estimated slightly differently.
\beno
&& \sup_{j} \,2^{2 \sigma j} \int_0^T L_5 \,d\tau\\
&=& C\, \sup_{j} \,2^{2 \sigma j} \int_0^T \|\Delta_j  b^{(n+1)}\|_{L^2} \,\|\Delta_j u^{(n)}\|_{L^2}\,  \sum_{m\le j-1}
2^{(1+\frac{d}{2})m} \|\Delta_m b^{(n+1)}\|_{L^2}\,d\tau \\
&=& C\, \sup_{j} \int_0^T 2^{\sigma j} \|\Delta_j  b^{(n+1)}\|_{L^2}\,
2^{(\alpha + \sigma)j }\|\Delta_j u^{(n)}\|_{L^2}\,\\
&& \qquad\qquad\qquad \times \sum_{m\le j-1} 2^{\alpha(m-j)}\,
2^{(1+\frac{d}{2}- (\alpha+ \sigma))m} \,2^{\sigma m}\, \|\Delta_m b^{(n+1)}\|_{L^2}\,d\tau\\
&\le& C\, \|b^{(n+1)}\|^2_{\widetilde L^\infty(0, T; B^{\sigma}_{2,\infty})}
\sup_{j}  \,  \int_0^T 2^{(\alpha + \sigma)j}\|\Delta_j u^{(n)}\|_{L^2}\,d\tau\\
&\le& C\, \|b^{(n+1)}\|^2_{\widetilde L^\infty(0, T; B^{\sigma}_{2,\infty})}\,
\sqrt{T}\,\sup_{j} \|2^{(\alpha + \sigma)j}\|\Delta_j u^{(n)}\|_{L^2}\|_{L^2(0,T)}\\
&=& C\,\sqrt{T}\, \|u^{(n)}\|_{\widetilde L^2(0, T; B^{\sigma + \alpha}_{2,\infty})}\, \|b^{(n+1)}\|^2_{\widetilde L^\infty(0, T; B^{\sigma}_{2,\infty})}\\
& \le & C\,\sqrt{T}\, M \,\|b^{(n+1)}\|^2_{\widetilde L^\infty(0, T; B^{\sigma}_{2,\infty})},
\eeno
where we have used the fact that $m-j <0$ and $1+\frac{d}{2} -\alpha-\sigma<0$.
The estimates of the other terms are similar,
\beno
&& \sup_{j} \,2^{2 \sigma j} \int_0^T L_6 \,d\tau \le \,C\,\sqrt{T}\, M \,\|b^{(n+1)}\|^2_{\widetilde L^\infty(0, T; B^{\sigma}_{2,\infty})},\\
&& \sup_{j} \,2^{2 \sigma j} \int_0^T L_7 \,d\tau \le \,C\,\sqrt{T}\, M\,
\|b^{(n+1)}\|_{\widetilde L^\infty(0, T; B^{\sigma}_{2,\infty})}\,
\|u^{(n+1)}\|_{\widetilde L^2(0, T; B^{\sigma + \alpha}_{2,\infty})}, \\
&& \sup_{j} \,2^{2 \sigma j} \int_0^T L_8 \,d\tau \le \,C\,\sqrt{T}\, M\,
\|b^{(n+1)}\|_{\widetilde L^\infty(0, T; B^{\sigma}_{2,\infty})}\,
\|u^{(n+1)}\|_{\widetilde L^2(0, T; B^{\sigma + \alpha}_{2,\infty})}, \\
&& \sup_{j} \,2^{2 \sigma j} \int_0^T L_9 \,d\tau \le \,C\,\sqrt{T}\, M\,
\|b^{(n+1)}\|_{\widetilde L^\infty(0, T; B^{\sigma}_{2,\infty})}\,
\|u^{(n+1)}\|_{\widetilde L^2(0, T; B^{\sigma + \alpha}_{2,\infty})}, \\
&& \sup_{j} \,2^{2 \sigma j} \int_0^T L_{10} \,d\tau \le \,C\,\sqrt{T}\, M\,
\|b^{(n+1)}\|_{\widetilde L^\infty(0, T; B^{\sigma}_{2,\infty})}\,
\|u^{(n+1)}\|_{\widetilde L^2(0, T; B^{\sigma + \alpha}_{2,\infty})}, \\
&& \sup_{j} \,2^{2 \sigma j} \int_0^T L_{11} \,d\tau \le \,C\,\sqrt{T}\, M\,
\|b^{(n+1)}\|_{\widetilde L^\infty(0, T; B^{\sigma}_{2,\infty})}\,
\|u^{(n+1)}\|_{\widetilde L^2(0, T; B^{\sigma + \alpha}_{2,\infty})}.
\eeno
Inserting the bounds above in (\ref{xjpp}) yields
\ben
&& \|(u^{(n+1)}, b^{(n+1)})\|^2_{\widetilde L^\infty(0, T; B^{\sigma}_{2,\infty})}
+ C_0
\|u^{(n+1)}\|^2_{\widetilde L^2(0, T; B^{\sigma + \alpha}_{2,\infty})} \notag\\
&\le& \|(u_0, b_0)\|^2_{B^{\sigma}_{2,\infty}}
+ C\,\sqrt{T}\,M \,\|(u^{(n+1)}, b^{(n+1)})\|^2_{\widetilde L^\infty(0, T; B^{\sigma}_{2,\infty})} \notag \\
&& +\, C\,\sqrt{T}\, M\, \|(u^{(n+1)}, b^{(n+1)})\|_{\widetilde L^\infty(0, T; B^{\sigma}_{2,\infty})}\, \|u^{(n+1)}\|_{\widetilde L^2(0, T; B^{\sigma + \alpha}_{2,\infty})}. \label{keyes}
\een
We choose $T>0$ to be sufficiently small such that
\beno
&& C\,\sqrt{T}\,M \,\|(u^{(n+1)}, b^{(n+1)})\|^2_{\widetilde L^\infty(0, T; B^{\sigma}_{2,\infty})} \\
&& +\, C\,\sqrt{T}\, M\, \|(u^{(n+1)}, b^{(n+1)})\|_{\widetilde L^\infty(0, T; B^{\sigma}_{2,\infty})}\, \|u^{(n+1)}\|_{\widetilde L^2(0, T; B^{\sigma + \alpha}_{2,\infty})}\\
&\le& \frac34\|(u^{(n+1)}, b^{(n+1)})\|^2_{\widetilde L^\infty(0, T; B^{\sigma}_{2,\infty})} + \frac34 C_0\,\|u^{(n+1)}\|^2_{\widetilde L^2(0, T; B^{\sigma + \alpha}_{2,\infty})}.
\eeno
Then (\ref{keyes}) implies
\beno
&& \|(u^{(n+1)}, b^{(n+1)})\|_{\widetilde L^\infty(0, T; B^{\sigma}_{2,\infty})} \le 2 \|(u_0, b_0)\|_{B^{\sigma}_{2,\infty}} = M, \\
&& \|u^{(n+1)}\|_{\widetilde L^2(0, T; B^{\sigma + \alpha}_{2,\infty})}  \le \frac{2}{\sqrt{C_0}} \|(u_0, b_0)\|_{B^{\sigma}_{2,\infty}} \le M.
\eeno
These uniform bounds allow us to extract a weakly convergent subsequence. That is,
there is $(u, b)\in Y$ such that a subsequence of $(u^{(n)}, b^{(n)})$ (still denoted by $(u^{(n)}, b^{(n)})$) satisfies
\beno
&& u^{(n)}  \overset{\ast}{\rightharpoonup} u \quad \mbox{in}\quad \widetilde L^\infty(0,T; B^{\sigma}_{2,\infty}) \cap
\widetilde L^2(0, T; B^{\sigma + \alpha}_{2,\infty}), \\
&& b^{(n)}  \overset{\ast}{\rightharpoonup} b \quad \mbox{in}\quad \widetilde L^\infty(0,T; B_{2,\infty}^{\sigma}).
\eeno
In order to show that $(u, b)$ is a weak solution of (\ref{mhd}), we need to further
extract a subsequence which converges strongly to $(u, b)$. This is done via the Aubin-Lions Lemma. We can show by making use of the equations in (\ref{sb}) that $(\p_t u^{(n)},\p_t b^{(n)})$ is {uniformly} bounded in
\beno
 \p_t u^{(n)} \in
\widetilde L^2(0, T; B^{\sigma -\alpha }_{2,\infty}), \qquad
\p_t b^{(n)}\in\widetilde L^2(0, T;  B^{\frac{d}{2}}_{2,\infty}).
\eeno
Since the domain here is the whole space $\mathbb R^d$, we need to combine Cantor's diagonal process with the Aubin-Lions Lemma to show that a subsequence of the weakly convergent subsequence, still denoted by $(u^{(n)}, b^{(n)})$, has the following strongly convergent property,
\beno
&& u^{(n)} \to u \qquad \mbox{in} \quad L^2(0, T; B^{\gamma_1}_{2,\infty}(Q))\quad \mbox{for}\quad \gamma_1 \in (\sigma -\alpha, \sigma + \alpha)\\
&& b^{(n)} \to b \qquad \mbox{in} \quad L^2(0, T; B^{\gamma_2}_{2,\infty}(Q))\quad \mbox{for}\quad \gamma_2 \in (d/2, \sigma),
\eeno
where $Q\subset \mathbb R^d$ is any compact subset. This strong convergence property would allow us to show that $(u, b)$ is indeed a weak solution of (\ref{mhd}). This process is routine and we omit the details. This completes the proof for the existence part of Theorem \ref{main}
in the case when $\alpha < 1$.
\end{proof}

\vskip .3in
\section{Proof for the uniqueness part of Theorem \ref{main}}
\label{uni}

This section proves the uniqueness part of Theorem \ref{main}.

\begin{proof}
	Assume that $(u^{(1)}, b^{(1)})$ and $(u^{(2)}, b^{(2)})$ are two solutions. Their difference $(\widetilde u, \widetilde b)$ with
	$$
	\widetilde u = u^{(2)} - u^{(1)}, \qquad \widetilde b = b^{(2)} - b^{(1)}
	$$
	satisfies
	\beq \label{mhd_diff}
	\begin{cases}
		\p_t \widetilde u + \nu (-\Delta)^\alpha \widetilde u = -\mathbb P (u^{(2)}\cdot\na \widetilde u +\widetilde u\cdot \na u^{(1)} ) + \mathbb P (b^{(2)} \cdot\na \widetilde b + \widetilde b\cdot\na b^{(1)}), \\
		\p_t\widetilde b  = -u^{(2)}\cdot\na \widetilde b -\widetilde u\cdot\na b^{(1)}  + b^{(2)} \cdot\na \widetilde u + \widetilde b\cdot\na u^{(1)},  \\
		\na \cdot \widetilde u =\na \cdot\widetilde b =0, \\
		\widetilde u(x,0) =0, \quad\widetilde b(x,0) =0.
	\end{cases}
	\eeq
We estimate the difference
$(\widetilde u, \widetilde b)$ in $L^2(\mathbb R^d)$. Dotting (\ref{mhd_diff}) by ${(\widetilde u, \widetilde b)}$ and applying the divergence-free condition, we find
	\ben\label{xdd}
	\frac12 \frac{d}{dt} (\|\widetilde{u}\|_{L^2}^2 + \|\widetilde{b}||_{L^2}^2) + \nu \|\Lambda^\alpha \widetilde{u}\|_{L^2}^2 = Q_1 + Q_2 + Q_3 + Q_4 + Q_5,
	\een
	where
	\beno
	&& Q_1 = - \int \widetilde u\cdot \na u^{(1)} \cdot \widetilde u\,dx,\\
	&& Q_2 = \int b^{(2)} \cdot\na \widetilde b \cdot \widetilde u\,dx + \int b^{(2)} \cdot\na \widetilde u \cdot \widetilde{b}\, dx,\\
	&&  Q_3 = \int \widetilde b\cdot\na b^{(1)}\cdot \widetilde{u}\, dx, \\
	&& Q_4 = - \int \widetilde u\cdot\na b^{(1)}\cdot \widetilde{b}\, dx,\\
	&& Q_5 = \int \widetilde b\cdot\na u^{(1)} \cdot \widetilde{b}\, dx.
	\eeno
	Due to $\na \cdot b^{(2)} =0$, we find $Q_2 =0$ after integration by parts. We remark that $Q_3 + Q_4$ is not necessarily zero. The rest of the proof distinguish between the two cases: $\alpha\ge 1$ and $\alpha <1$. For the sake of clarity, we divide the rest of this section into two subsections.

	\subsection{The case $\alpha\ge 1$} The uniqueness for the case when $\alpha=1$  has been obtained in \cite{CMRR,LTY,Wan}.  Our attention will be focused on $\alpha>1$. In this subsection $\Delta_j$ denotes the homogeneous dyadic block operators for the simplicity of notation.
	By H\"{o}lder's inequality,
	\beno
	|Q_1| \le \|\na  u^{(1)}\|_{L^\infty} \, \|\widetilde{u}\|_{L^2}^2, \qquad |Q_5| \le  \,C\,  \|\na  u^{(1)}\|_{L^\infty} \, \|\widetilde{b}\|_{L^2}^2.
	\eeno
	By integration by parts,
	$$
	Q_3 = -\int \widetilde b\cdot\na \widetilde{u}\cdot  b^{(1)}\,dx.
	$$
	For $p$ and $q$ defined by
	$$
	\frac1p = \frac12+ \frac{1-\alpha}{d}, \qquad \frac1p + \frac1q = \frac12,
	$$
	we have, by H\"{o}lder's inequality,
	\beno
	|Q_3| &\le& \|\widetilde b\|_{L^2} \, \|\na \widetilde{u}\|_{L^p}\, \|b^{(1)}\|_{L^q} \\
	&\le& \,C\, \|\widetilde b\|_{L^2} \,\|\Lambda^{\alpha-1} \na \widetilde{u}\|_{L^2}\, \|b^{(1)}\|_{L^q}\\
	&\le& \frac{\nu}{4} \|\Lambda^\alpha \widetilde{u}\|_{L^2}^2 + \,C\, \|b^{(1)}\|^2_{L^q}\,  \|\widetilde b\|^2_{L^2}.
	\eeno
	$Q_4$ obeys the same bound. Inserting these bounds in (\ref{xdd}), we find
	\beno
	&&\frac{d}{dt} (\|\widetilde{u}\|_{L^2}^2 + \|\widetilde{b}||_{L^2}^2) + \nu \|\Lambda^\alpha \widetilde{u}\|_{L^2}^2 \\
	&\le&\, C\, \|\na  u^{(1)}\|_{L^\infty}\,
	(\|\widetilde{u}\|_{L^2}^2 + \|\widetilde{b}||_{L^2}^2) + \,C\, \|b^{(1)}\|^2_{L^q}\,  \|\widetilde b\|_{L^2}.
	\eeno
	By Bernstein's inequality,
	$$
	\|\na u^{(1)}\|_{L^\infty}  \le \sum_j \|\Delta_j \na u^{(1)}\|_{L^\infty} \le \sum_j 2^{(1+\frac{d}{2})j } \|\Delta_j u^{(1)}\|_{L^2} = \|u^{(1)}\|_{\mathring B^{1+\frac{d}{2}}_{2,1}}
	$$
	and
	\beno
	\|b^{(1)}\|_{L^q} &\le& \sum_j \|\Delta_j  b^{(1)}\|_{L^q} \le C\,\sum_{j}
	2^{j d (\frac12 -\frac1q)} \|\Delta_j b^{(1)}\|_{L^2} \\
	&=& C\,\sum_{j} 2^{j d(\frac12+\frac{1-\alpha}{d})} \|\Delta_j b^{(1)}\|_{L^2} = C\, \|b^{(1)}\|_{\mathring B^{1+\frac{d}{2}-\alpha}_{2,1}}.
	\eeno
	Therefore,
	\beno
	\frac{d}{dt} (\|\widetilde{u}\|_{L^2}^2 + \|\widetilde{b}\|_{L^2}^2)  \le \,C\, (\|u^{(1)}\|_{\mathring B^{1+\frac{d}{2}}_{2,1}} + \|b^{(1)}\|^2_{\mathring B^{1+\frac{d}{2}-\alpha}_{2,1}})\,  (\|\widetilde{u}\|_{L^2}^2 + \|\widetilde{b}||_{L^2}^2).
	\eeno
	Due to the time integrability
	$$
	\int_0^T (\|u^{(1)}\|_{\mathring B^{1+\frac{d}{2}}_{2,1}} + \|b^{(1)}\|^2_{\mathring B^{1+\frac{d}{2}-\alpha}_{2,1}}) \,dt <\infty,
	$$
	Gronwall's inequality then implies that
	$$
	\|\widetilde{u}\|_{L^2} =  \|\widetilde{b}\|_{L^2} =0.
	$$

\vskip .1in
\subsection{The case $\alpha <1$} The operator $\Delta_j$ in this subsection denotes
the inhomogeneous dyadic block operators and the Besov spaces are inhomogeneous. By H\"{o}lder's inequality,
	$$
	|Q_1| \le \|\na  u^{(1)}\|_{L^\infty} \, \|\widetilde{u}\|_{L^2}^2, \qquad
|Q_5| \le 	 \|\na  u^{(1)}\|_{L^\infty} \, \|\widetilde{b}\|_{L^2}^2.
$$
To bound $Q_3$, we set
	$$
	\frac1p = \frac12 -\frac{\alpha}{d}, \qquad \frac1q +\frac1p =\frac12
	$$
	and apply H\"{o}lder's inequality and the Hardy-Littlewood-Sobolev inequality to obtain
    \beno
	|Q_3| &\le& \|\widetilde b\|_{L^2}\, \|\na  b^{(1)}\|_{L^q}\, \|\widetilde{u}\|_{L^p}\\
	&\le& \,C\, \|\widetilde b\|_{L^2}\,\|\na b^{(1)}\|_{L^q}\, \|\Lambda^\alpha \widetilde u\|_{L^2}\\
	&\le& \, \frac{\nu}{4} \|\Lambda^\alpha \widetilde u\|^2_{L^2} + C\, \|\na b^{(1)}\|_{L^q}^2 \, \|\widetilde b\|^2_{L^2}.
	\eeno
	$Q_4$ obeys exactly the same bound. Inserting these bounds in (\ref{xdd}), we find
	\ben
	&& \frac{d}{dt} (\|\widetilde{u}\|_{L^2}^2 + \|\widetilde{b}||_{L^2}^2) + \nu \|\Lambda^\alpha \widetilde{u}\|_{L^2}^2 \notag\\
	&\le& \, C\, \|\na  u^{(1)}\|_{L^\infty}\,
	(\|\widetilde{u}\|_{L^2}^2 + \|\widetilde{b}||_{L^2}^2) + \,C\, \|\na b^{(1)}\|^2_{L^q}\,  \|\widetilde b\|^2_{L^2}. \label{ggp}
	\een
	By Bernstein's inequality,
	\ben
	\int_0^T \|\na  u^{(1)}\|_{L^\infty}\, dt &\le& \sum_{j \ge -1} \int_0^T \|\Delta_j \na u^{(1)}\|_{L^\infty}\,dt \notag\\
	&\le& \sum_{j \ge -1} \int_0^T 2^{(1+\frac{d}{2})j } \|\Delta_j u^{(1)}\|_{L^2} dt \notag\\
	&\le& \sum_{j \ge -1} 2^{(1+\frac{d}{2}-\alpha -\sigma)j } \int_0^T   2^{(\alpha + \sigma)j}\, \|\Delta_j u^{(1)}\|_{L^2} dt \notag\\
		&\le& \sum_{j \ge -1} 2^{(1+\frac{d}{2}-\alpha -\sigma)j } \sqrt{T}\,
		\|2^{(\alpha + \sigma)j}\, \|\Delta_j u^{(1)}\|_{L^2}\|_{L^2(0, T)} \notag\\
		&\le& \le\,C\, \sqrt{T}\, \|u^{(1)}\|_{\widetilde L^2(0, T; B^{\sigma + \alpha}_{2,\infty})}, \label{jpp0}
	\een
	where we have used the fact that $\sigma> 1+ \frac{d}{2} -\alpha$.
	In addition,
	\beno
	\|\na  b^{(1)}\|_{L^q} &\le& \sum_{j\ge -1} \|\Delta_j \na b^{(1)}\|_{L^q} \\
	&\le& \, C\, \sum_{j\ge -1} 2^{j + dj (\frac12-\frac1q)}\,\|\Delta_j b^{(1)}\|_{L^2}\\
	&\le& \, C\,\sum_{j\ge -1} 2^{j + dj (\frac12-\frac\alpha d)}\, \|\Delta_j b^{(1)}\|_{L^2}\\
	&=& \, C\,\sum_{j\ge -1} 2^{(1+ \frac{d}{2} -\alpha -\sigma)j }\,
	2^{\sigma j} \,\|\Delta_j b^{(1)}\|_{L^2}\\
	&\le& \, C\,\|b^{(1)}\|_{B^{\sigma}_{2,\infty}},
	\eeno
  	where again we have used the fact that $\sigma> 1+ \frac{d}{2} -\alpha$. Therefore,
  	\ben
  		\int_0^T  \|\na  b^{(1)}\|_{L^q}^2\,dt \le \,C\, T \, \|b^{(1)}\|^2_{\widetilde L^\infty(0, T; B^{\sigma}_{2,\infty})}. \label{jpp}
  	\een
	Applying Gronwall's inequality to (\ref{ggp}) and invoking (\ref{jpp0}) and (\ref{jpp}), we obtain
	$$
	\|\widetilde{u}\|_{L^2} =  \|\widetilde{b}\|_{L^2} =0,
	$$
	which leads to the desired uniqueness. This completes the proof of the uniqueness part  of Theorem \ref{main}.
\end{proof}

\vskip .3in
\section{Conclusion and discussions}
\label{diss}

We have established that, for $\alpha\ge 1$,
any initial data $(u_0, b_0)$ with
\beno
u_0 \in \mathring B_{2,1}^{\frac{d}{2}+ 1 -2\alpha} (\mathbb R^d), \quad  b_0\in \mathring B_{2,1}^{\frac{d}{2}+1 -\alpha} (\mathbb R^d),
\eeno
and, for $\alpha<1$, any initial data $(u_0, b_0)$ with
\ben
&&  u_0 \in B_{2,\infty}^{\sigma} (\mathbb R^d), \quad b_0\in B_{2,\infty}^{\sigma}, (\mathbb R^d), \qquad \sigma > \frac{d}{2}+1 -\alpha \label{ddyou}
\een
leads to a unique local weak solution of (\ref{mhd}). This purpose of this section is to explain in some detail on why these regularity assumptions may be optimal. The optimality for the case $\alpha\ge 1$ can be easily explained. The index  $\frac{d}{2}+ 1 -2\alpha$ is minimal for the velocity in order to achieve
the uniqueness. As we know, the velocity $u$ should obey $\int_0^T \|\na u\|_{L^\infty}\,dt<\infty$ or a slightly weaker version in order to guarantee the uniqueness. In the Besov setting here, we need
$$
\|u\|_{L^1(0, T; \mathring B^{1+\frac{d}{2}}_{2,1}(\mathbb R^d))} <\infty,
$$
which, in turn, requires that
$$
u \in \widetilde L^\infty(0, T; \mathring B^{1+\frac{d}{2}-2\alpha}_{2,1}(\mathbb R^d)).
$$
This is how the index $\frac{d}{2}+ 1 -2\alpha$ arises.  Once the Besov space for $u_0$ is set, the functional setting for $b_0$ is determined correspondingly.

\vskip .1in
We now  explain why the initial setup for the case $\alpha<1$ may be optimal.
We have attempted to replace (\ref{ddyou}) by several weaker assumptions, but we failed to establish the desired existence and uniqueness. We now describe the difficulties associated with those weaker initial data.

\subsection{Can we replace (\ref{ddyou}) by $u_0 \in B_{2,1}^{\frac{d}{2}+ 1 -2\alpha} (\mathbb R^d)$ and $b_0\in  B_{2,1}^{\frac{d}{2}} (\mathbb R^d)$?} We would have difficulty proving the uniform boundedness of the successive approximation sequence in the existence proof part. If we assume that
\beno
u_0 \in B_{2,1}^{\frac{d}{2}+ 1 -2\alpha} (\mathbb R^d) \quad\mbox{and}\quad  b_0\in  B_{2,1}^{\frac{d}{2}} (\mathbb R^d),
\eeno
then the corresponding functional space for $(u, b)$ would be
\beno
Y \equiv  \Big\{(u, b) \big|&& \|u\|_{\widetilde L^\infty(0,T;  B^{\frac{d}{2}+ 1 -2\alpha}_{2,1})} \le M, \quad  \|b\|_{\widetilde L^\infty(0,T;  B_{2,1}^{\frac{d}{2}})}
\le M, \,\,\, \notag\\
&& \|u\|_{L^1(0, T; \,  B^{\frac{d}{2}+ 1}_{2,1})} \le \delta, \quad  \|u\|_{\widetilde L^2(0,T;  B^{\frac{d}{2}+ 1-\alpha}_{2,1})} \le \delta\Big\}
\eeno
Suppose we construct the successive approximation sequence by (\ref{sb}). We can  obtain suitable bounds for
$$
\|u^{(n+1)}\|_{\widetilde L^\infty(0,T;  B^{\frac{d}{2}+ 1 -2\alpha}_{2,1})}, \quad
\|b^{(n+1)}\|_{\widetilde L^\infty(0,T;  B_{2,1}^{\frac{d}{2}})}.
$$
We would have difficulty controlling  $\|u^{(n+1)}\|_{L^1(0, T; \,  B^{\frac{d}{2}+ 1}_{2,1})}$ due to the term $b^{(n)} \cdot\na b^{(n)}$ in (\ref{sb}). A quick way to see the difficultly is to count the derivatives needed and the derivatives allowed,
$$
\left(\frac{d}{2}+ 1\right) + \left(\frac{d}{2}+ 1\right) -2\alpha =2 \left(\frac{d}{2}+ 1 -\alpha\right) > 2 \cdot \frac{d}{2}.
$$
We explain the meaning of this inequality. The left-hand side $2 \left(\frac{d}{2}+ 1 -\alpha\right)$ represents the derivative imposed and the right-hand side $2 \cdot \frac{d}{2}$ denotes the derivatives allowed on the two $b^{(n)}$'s.  The first $\frac{d}{2}+ 1$ comes from $B^{\frac{d}{2}+ 1}_{2,1}$, the second $\frac{d}{2}+ 1$ represents the derivative when we estimates $\|b^{(n)} \cdot\na b^{(n)}\|_{L^2}$ and $-2\alpha$ is due to the dissipation. When $\alpha<1$, the derivatives imposed are more than the derivatives allowed and we can not close the estimates in $Y$.

\vskip .1in
\subsection{Can we replace (\ref{ddyou}) by $u_0 \in B_{2,1}^{\frac{d}{2}+ 1 -2\alpha} (\mathbb R^d)$ and $b_0\in  B_{2,1}^{\frac{d}{2} + 1-\alpha} (\mathbb R^d)$?} Even though we increased the regularity of $b_0$ to the level that would allow us to overcome one difficulty mentioned in the previous subsection, we would still have trouble proving the uniform boundedness of the successive approximation sequence in the existence proof part. If we assume that
\beno
u_0 \in B_{2,1}^{\frac{d}{2}+ 1 -2\alpha} (\mathbb R^d) \quad\mbox{and}\quad  b_0\in  B_{2,1}^{\frac{d}{2} + 1-\alpha} (\mathbb R^d),
\eeno
then the corresponding functional space for $(u, b)$ would be
\beno
Y \equiv  \Big\{(u, b) \big|&& \|u\|_{\widetilde L^\infty(0,T;  B^{\frac{d}{2}+ 1 -2\alpha}_{2,1})} \le M, \quad  \|b\|_{\widetilde L^\infty(0,T;  B_{2,1}^{\frac{d}{2} + 1-\alpha})}
\le M, \,\,\, \notag\\
&& \|u\|_{L^1(0, T; \,  B^{\frac{d}{2}+ 1}_{2,1})} \le \delta, \quad  \|u\|_{\widetilde L^2(0,T;  B^{\frac{d}{2}+ 1-\alpha}_{2,1})} \le \delta\Big\}
\eeno
Suppose we construct the successive approximation sequence by (\ref{sb}). We can  obtain suitable bounds for
$$
\|u^{(n+1)}\|_{\widetilde L^\infty(0,T;  B^{\frac{d}{2}+ 1 -2\alpha}_{2,1})}, \quad
\|u^{(n+1)}\|_{L^1(0, T; \,  B^{\frac{d}{2}+ 1}_{2,1})}.
$$
But this new setup would make it impossible to control
$$
\|b^{(n+1)}\|_{\widetilde L^\infty(0,T;  B_{2,1}^{\frac{d}{2}+1-\alpha})}.
$$
The difficulty comes from bounding the term $b^{(n)} \cdot\na u^{(n)}$ in the equation of $b^{(n+1)}$ in (\ref{sb}). In order to bound $\|\Delta_j (b^{(n)} \cdot\na u^{(n)})\|_{L^2}$, one naturally decomposes it by paraproducts as in
(\ref{xx}),
\beno
\|\Delta_j (b^{(n)} \cdot\na u^{(n)})\|_{L^2} &\le &
\,C\, 2^j\, \|\Delta_j  u^{(n)}\|_{L^2} \, \sum_{m\le j-1}
2^{\frac{d}{2}\,m} \|\Delta_m b^{(n)}\|_{L^2} \\
&& + \,C\,  \|\Delta_j  b^{(n)}\|_{L^2} \, \sum_{m\le j-1}
2^{(1+\frac{d}{2})m} \|\Delta_m u^{(n)}\|_{L^2}\\
&& + \,C\, 2^j\,  \sum_{k\ge j-1} 2^{\frac{d}{2} k} \,\|\Delta_k b^{(n)}\|_{L^2} \,\|\widetilde{\Delta}_k u^{(n)}\|_{L^2}.
\eeno
The trouble arises in the first  term on the right-hand side. For $\alpha<1$, we can no longer bound
$$
2^{(1-\alpha) j} \,\sum_{m\le j-1}
2^{\frac{d}{2}\,m} \|\Delta_m b^{(n)}\|_{L^2}
$$
by
$$
\sum_{m\le j-1}
2^{(\frac{d}{2}+1 -\alpha)\,m} \|\Delta_m b^{(n)}\|_{L^2}
$$
and, as a consequence, we are not able to control $b^{(n)} \cdot\na u^{(n)}$ by the desired bound $\|u^{(n)}\|_{L^1(0, T; \,  B^{\frac{d}{2}+ 1}_{2,1})}\,
\|b^{(n)}\|_{\widetilde L^\infty(0,T;  B_{2,1}^{\frac{d}{2}+1-\alpha})}$. This problem arises when $u$ and $b$ are in different functional settings. We can no longer estimate $u$ and $b$ simultaneously and the good structure of combining the terms $b\cdot\na b$ and $b\cdot\na u$ can no longer be taken advantage of.

\vskip .1in
\subsection{Can we replace (\ref{ddyou}) by $u_0 \in B_{2,1}^{\frac{d}{2}+ 1 -\alpha} (\mathbb R^d)$ and $b_0\in  B_{2,1}^{\frac{d}{2} + 1-\alpha} (\mathbb R^d)$?}  Even $u_0$ and $b_0$ are now in the same functional setting, but we are still not able to prove the uniform boundedness of the successive approximation sequence in the existence proof part. We now explain the difficulty. Naturally the
corresponding functional setting for $(u, b)$ is
 \beno
 Y \equiv  \Big\{(u, b) \big|&& \|(u, b)\|_{\widetilde L^\infty(0,T;  B^{\frac{d}{2}+ 1 -\alpha}_{2,1})} \le M,  \notag\\
 && \|u\|_{L^1(0, T; \,  B^{\frac{d}{2}+ 1}_{2,1})} \le \delta, \quad  \|u\|_{\widetilde L^2(0,T;  B^{\frac{d}{2}+ 1-\alpha}_{2,1})} \le \delta\Big\}
 \eeno
Suppose we construct the successive approximation sequence by (\ref{sb1}). In order
to make use of the cancellation in the combination of $b\cdot\na b$
and $b\cdot\na u$, we have to add the estimates at the $L^2$-level as in (\ref{xj8}). However, if we add them at the $L^2$-level, it is then impossible to
control the norm of $(u, b)$ in $B^{\frac{d}{2}+ 1 -\alpha}_{2,1}$. This is exactly
why we have selected the functional setting $B^{\sigma}_{2,\infty}$ with
$\sigma> \frac{d}{2}+ 1 -\alpha$ when $\alpha<1$, as in the proof in Section \ref{exi2}.

\vskip .1in
In conclusion, the regularity assumptions on the initial data in Theorem \ref{main} may be optimal.

\vskip .3in
\appendix

\section{Besov spaces and related tools}


This appendix provides the definition of the Besov spaces and related facts that have been used in the previous sections. Some of the materials are taken from \cite{BCD}. More details can be found in several books and many papers (see, e.g., \cite{BCD,BL,MWZ,RS,Tri}). In addition, we also prove several bounds on triple products involving Fourier localized functions. These bounds have been used in the previous sections.

\vskip .1in
We start with the partition of unit. Let $B(0, r)$ and ${\mathcal C}(0, r_1, r_2)$ denote the standard ball and the annulus, respectively,
$$
B(0, r) = \left\{\xi \in \mathbb R^d: \, |\xi| \le r \right\}, \qquad
\mathcal C (0, r_1, r_2) = \left\{\xi \in \mathbb R^d:\, r_1 \le|\xi|\le r_2 \right\}.
$$
There are two compactly supported smooth radial functions $\phi$ and $\psi$ satisfying
\ben
&& \mbox{supp} \,\phi \subset B(0, 4/3), \quad \mbox{supp} \,\psi \subset \mathcal C(0, 3/4, 8/3),  \notag\\
&& \phi(\xi) + \sum_{j\ge 0}  \psi(2^{-j} \xi) = 1 \qquad \mbox{for all}\,\, \xi \in \mathbb R^d,\label{aa}\\
&& \sum_{j\in \mathbb Z} \psi(2^{-j} \xi) = 1 \qquad \mbox{for $\xi \in \mathbb R^d\setminus \{0\}$ }.\notag
\een
We use $\widetilde h$ and $h$ to denote the inverse Fourier transforms of $\phi$ and
$\psi$ respectively,
$$
\widetilde h = \mathcal F^{-1} \phi, \quad  h = \mathcal F^{-1} \psi.
$$
In addition, for notational convenience, we write $\psi_j(\xi) = \psi(2^{-j} \xi)$. By a simple property of the Fourier transform,
$$
h_j(x) :=\mathcal F^{-1} (\psi_j)(x) = 2^{d j} \, h(2^j x).
$$
The inhomogeneous dyadic block operator $\Delta_j$ are defined as follows
\beno
&& \Delta_j f=0 \qquad \mbox{for $j\le -2$},\\
&& \Delta_{-1} f = \widetilde h \ast f = \int_{\mathbb R^d} f(x-y) \, \widetilde h(y)\,dy,\\
&& \Delta_j f = h_j \ast f = 2^{d j} \int_{\mathbb R^d} f(x-y) \,  h(2^j y)\,dy \qquad \mbox{for $j\ge 0$}.
\eeno
The corresponding inhomogeneous low frequency cut-off operator $S_j$ is defined by
$$
S_j f = \sum_{k\le j-1} \Delta_k f.
$$
For any function $f$ in the usual Schwarz class $\mathcal{S}$, (\ref{aa}) implies
\ben
\widehat f (\xi) = \phi(\xi)\, \widehat f (\xi) + \sum_{j\ge 0}  \psi(2^{-j} \xi)\,\widehat f (\xi)  \label{bb}
\een
or, in terms of the inhomogeneous dyadic block operators,
$$
f= \sum_{j\ge -1} \Delta_j f \quad \mbox{or}\quad \mbox{Id} = \sum_{j\ge -1} \Delta_j,
$$
where Id denotes the identity operator. More generally, for any $F$ in the space of tempered distributions, denoted  ${\mathcal S}'$, (\ref{bb}) still holds but in the distributional sense. That is, for $F \in {\mathcal S}'$,
\ben
F = \sum_{j\ge -1} \Delta_j F \quad \mbox{or}\quad \mbox{Id} = \sum_{j\ge -1} \Delta_j \qquad \mbox{in} \quad  {\mathcal S}'.  \label{bb1}
\een
In fact, one can verify that
$$
S_j F := \sum_{k\le j-1} \Delta_k F  \quad \to \quad F \qquad \mbox{in} \quad  {\mathcal S}'.
$$
(\ref{bb1}) is referred to as the Littlewood-Paley decomposition for tempered distributions.

\vskip .1in
In terms of the inhomogeneous dyadic block operators, we can write the standard
product in terms of the paraproducts, namely the Bony decomposition,
$$
F\, G = {\sum_{|j-k|\le 2} S_{k-1} F\, \Delta_k G + \sum_{|j-k|\le 2} \Delta_k F\, S_{k-1} G}
+ \sum_{k\ge j-1} \Delta_k F\, \widetilde \Delta_k G,
$$
where  $\widetilde\Delta_k = \Delta_{k-1} + \Delta_k + \Delta_{k+1}$.

\vskip .1in
The inhomogeneous Besov space can be defined in terms of  $\Delta_j$ specified above.
\begin{define}
	The inhomogeneous Besov space $B^s_{p,q}$ with $1\le p,q \le \infty$
	and $s\in {\mathbb R}$ consists of  $f\in {\mathcal S}'$
satisfying
$$
\|f\|_{B^s_{p,q}} \equiv \|2^{js} \|\Delta_j f\|_{L^p} \|_{l^q}
<\infty.
$$
\end{define}

\vskip .1in
The concepts defined above have their homogeneous version. The homogeneous dyadic block  and the homogeneous low frequency cutoff operators are defined by, for any $j\in \mathbb Z$,
\beno
&& \mathring \Delta_j f  = h_j \ast f = 2^{d j} \int_{\mathbb R^d} f(x-y) \,  h(2^j y)\,dy,\\
&& \mathring S_j f = \sum_{k\le j-1} \mathring \Delta_k f = 2^{j d} \int_{\mathbb R^d} \widetilde h(2^j y) \, f(x-y)\,dy.
\eeno
For any function $f$ in the usual Schwarz class $\mathcal{S}$, (\ref{aa}) implies
\beno
\widehat f (\xi) = \sum_{j\in \mathbb Z}  \psi(2^{-j} \xi)\,\widehat f (\xi) \qquad \mbox{for all $\xi \in \mathbb R^d$}
\eeno
when $f$ satisfies, for any $x^\beta$ in the set of all polynomials $\mathcal P$,
$$
\int_{\mathbb R^d} x^\beta \,f(x)\,dx =0.
$$
In order to write the Littlewood-Paley decomposition for $F\in \mathcal S'$, we need to restrict to the subspace $\mathcal S_h'$ consisting of $f\in \mathcal S'$ satisfying
$$
\lim_{j \to -\infty}  \mathring S_j f =0\quad \mbox{in}\quad \mathcal S'.
$$
Any $f\in \mathcal S'$ that has a locally integrable Fourier transform is in $\mathcal S_h'$.

\vskip .1in
The homogeneous Besov space can be defined in terms of  $\mathring \Delta_j$ specified above.
\begin{define}
	The homogeneous Besov space $\mathring B^s_{p,q}$ with $1\le p,q \le \infty$
	and $s\in {\mathbb R}$ consists of  $f\in {\mathcal S}_h'$
	satisfying
	$$
	\|f\|_{\mathring B^s_{p,q}} \equiv \|2^{js} \|\mathring \Delta_j f\|_{L^p} \|_{l^q}
	<\infty.
	$$
\end{define}
In terms of the homogeneous dyadic blocks, we can also write the standard products
in terms of the paraproducts.

\vskip .1in
The following space-time spaces introduced in \cite{ChLe} have been used in the
previous sections.
\begin{define}
	Let $s\in \mathbb R$ and $1\le p,q, r \le \infty$. Let $T\in (0, \infty]$.
	The space-time space $\widetilde L^r(0, T; B^s_{p,q})$ consists of tempered distributions satisfying
	$$
	\|f\|_{L^r(0, T; B^s_{p,q})} \equiv \|2^{js} \|\|\Delta_j f\|_{L^p}\|_{L^r(0, T)} \|_{l^q} <\infty.
	$$
	$\widetilde L^r(0, T; \mathring B^s_{p,q})$ is similarly defined.
\end{define}

By Minkowski's inequality, the standard space-time space $L^r(0, T; B^s_{p,q})$ is related to $\widetilde L^r(0, T; B^s_{p,q})$ as follows
\beno
&& L^r(0, T; B^s_{p,q}) \subsetneq \widetilde L^r(0, T; B^s_{p,q}) \qquad \mbox{if $r < q$},\\
&& \widetilde L^r(0, T; B^s_{p,q}) \subsetneq L^r(0, T; B^s_{p,q}) \qquad \mbox{if $r > q$},\\
&& L^r(0, T; B^s_{p,q}) = \widetilde L^r(0, T; B^s_{p,q}) \qquad \mbox{if $r = q$}.
\eeno

\vskip .1in
Bernstein's inequality is a useful tool on Fourier localized functions and these inequalities trade derivatives for integrability. The following proposition provides Bernstein type inequalities for fractional derivatives.
\begin{lemma}\label{bern}
Let $\alpha\ge0$. Let $1\le p\le q\le \infty$.
\begin{enumerate}
	\item[1)] If $f$ satisfies
	$$
	\mbox{supp}\, \widehat{f} \subset \{\xi\in \mathbb{R}^d: \,\, |\xi|
	\le K 2^j \},
	$$
	for some integer $j$ and a constant $K>0$, then
	$$
	\|(-\Delta)^\alpha f\|_{L^q(\mathbb{R}^d)} \le C_1\, 2^{2\alpha j +
		j d(\frac{1}{p}-\frac{1}{q})} \|f\|_{L^p(\mathbb{R}^d)}.
	$$
	\item[2)] If $f$ satisfies
	\begin{equation*}
	\mbox{supp}\, \widehat{f} \subset \{\xi\in \mathbb{R}^d: \,\, K_12^j
	\le |\xi| \le K_2 2^j \}
	\end{equation*}
	for some integer $j$ and constants $0<K_1\le K_2$, then
	$$
	C_1\, 2^{2\alpha j} \|f\|_{L^q(\mathbb{R}^d)} \le \|(-\Delta)^\alpha
	f\|_{L^q(\mathbb{R}^d)} \le C_2\, 2^{2\alpha j +
		j d(\frac{1}{p}-\frac{1}{q})} \|f\|_{L^p(\mathbb{R}^d)},
	$$
			where $C_1$ and $C_2$ are constants depending on $\alpha,p$ and $q$
	only.
\end{enumerate}
\end{lemma}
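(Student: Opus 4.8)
The plan is to prove both parts by the classical scaling-plus-convolution argument, reducing every estimate to Young's inequality for convolutions. First I would record that on the Fourier side $(-\Delta)^\alpha$ acts on a frequency-localized function by multiplication by $|\xi|^{2\alpha}$, and that this symbol may be multiplied by any smooth cutoff equal to $1$ on the Fourier support of $f$ without altering $(-\Delta)^\alpha f$. Concretely, fix a radial $\Phi\in C_c^\infty(\mathbb R^d)$ with $\Phi\equiv 1$ on $\{|\xi|\le K\}$ and set $\Phi_j(\xi)=\Phi(2^{-j}\xi)$; since $\widehat f$ is supported in $\{|\xi|\le K2^j\}$ we have $\widehat{(-\Delta)^\alpha f}(\xi)=|\xi|^{2\alpha}\Phi_j(\xi)\widehat f(\xi)$, so that $(-\Delta)^\alpha f=G_j\ast f$ with $G_j=\mathcal F^{-1}\big(|\xi|^{2\alpha}\Phi_j\big)$.

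The second step is the scaling computation. Writing $m_0(\eta)=|\eta|^{2\alpha}\Phi(\eta)$ and using $|\xi|^{2\alpha}\Phi_j(\xi)=2^{2\alpha j}m_0(2^{-j}\xi)$, one obtains $G_j(x)=2^{2\alpha j+jd}\,G_0(2^j x)$, where $G_0=\mathcal F^{-1}(m_0)$ is a fixed kernel independent of $j$. Applying Young's inequality $\|G_j\ast f\|_{L^q}\le\|G_j\|_{L^r}\|f\|_{L^p}$ with $\tfrac1r=1+\tfrac1q-\tfrac1p$ (which lies in $[1,\infty]$ precisely because $p\le q$), and computing $\|G_j\|_{L^r}$ from the scaling relation, gives $\|G_j\|_{L^r}=2^{2\alpha j+jd(\frac1p-\frac1q)}\|G_0\|_{L^r}$ since $1-\tfrac1r=\tfrac1p-\tfrac1q$. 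This produces exactly the claimed power $2^{2\alpha j+jd(1/p-1/q)}$ with $C_1=\|G_0\|_{L^r}$, provided $\|G_0\|_{L^r}<\infty$.

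The third step treats the lower bound in part 2). There the Fourier support lies in an annulus avoiding the origin, so I would choose a cutoff $\Psi\in C_c^\infty$ supported away from $0$ and equal to $1$ on $\{K_1\le|\xi|\le K_2\}$, and write $f=\mathcal F^{-1}\big(|\xi|^{-2\alpha}\Psi(2^{-j}\xi)\big)\ast (-\Delta)^\alpha f$. Repeating the scaling argument with $|\xi|^{-2\alpha}$ in place of $|\xi|^{2\alpha}$ and taking $p=q$ (so $r=1$) yields $\|f\|_{L^q}\le C\,2^{-2\alpha j}\|(-\Delta)^\alpha f\|_{L^q}$, which is the desired lower bound; the matching upper bound in part 2) follows from part 1) applied with the annulus support.

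The main obstacle is verifying $\|G_0\|_{L^r}<\infty$ for the relevant $r\in[1,\infty]$. In part 2) this is immediate, since the annulus cutoff keeps $m_0=|\eta|^{2\alpha}\Psi$ (respectively $|\eta|^{-2\alpha}\Psi$) in $C_c^\infty$, so $G_0$ is Schwartz and belongs to every $L^r$. In part 1) the symbol $|\eta|^{2\alpha}$ is smooth only away from the origin when $2\alpha\notin 2\mathbb Z$, so I would argue the decay of $G_0$ directly: away from the origin $m_0$ is $C^\infty$ and compactly supported, while near the origin it coincides with the homogeneous symbol $|\eta|^{2\alpha}$, whose inverse transform is homogeneous of degree $-d-2\alpha$. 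Hence for $\alpha>0$ one gets $|G_0(x)|\le C(1+|x|)^{-d-2\alpha}$, which is $r$-integrable at infinity for every $r\ge 1$ because $d+2\alpha>d$, while for $\alpha=0$ the symbol is already smooth and $G_0$ is Schwartz. In all cases $G_0\in L^r$ for every $r\in[1,\infty]$ with a norm depending only on $\alpha,d$ and the fixed cutoff, so the constants $C_1,C_2$ depend only on $\alpha,p,q$, as claimed.
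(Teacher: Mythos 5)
Your proof is correct: the paper states Lemma \ref{bern} without proof, as standard background (the fractional Bernstein inequality, cf.\ \cite{BCD,MWZ}), and your scaling-plus-Young's-inequality argument is precisely the classical proof that those references give. The one genuinely delicate point — that in part 1) the symbol $|\xi|^{2\alpha}\Phi(\xi)$ fails to be smooth at the origin when $2\alpha$ is not an even integer, so the kernel $G_0$ is not Schwartz and one must establish the decay bound $|G_0(x)|\le C\,(1+|x|)^{-d-2\alpha}$ (e.g.\ by a dyadic decomposition of the symbol) to conclude $G_0\in L^r$ for all $r\in[1,\infty]$ when $\alpha>0$ — is correctly identified and handled in your final paragraph.
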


\vskip .1in
We now state and prove bounds for the triple products involving Fourier localized functions. These bounds have been used in the previous sections in the proof of Theorem \ref{main}.

\begin{lemma} \label{para}
Let $j\in \mathbb Z$ be an integer. Let $\Delta_j$ be a dyadic block operator (either inhomogeneous or homogeneous).
\begin{enumerate}
	\item Let $F$ be a divergence-free vector field. Then there exists a constant $C$ independent of $j$ such that
\ben
&& \left|\int_{\mathbb R^d} \Delta_j (F\cdot \na G) \cdot \Delta_j H \,dx \right| \notag\\
&\le&\,C\, \|\Delta_j H\|_{L^2} \, \Big(2^j \,{\sum_{m \le j-1}} 2^{\frac{d}{2} m}\, \|\Delta_m F\|_{L^2} \sum_{|j-k| \le 2}\|\Delta_k G\|_{L^2} \notag\\
&& \quad + \sum_{|j-k| \le 2}\|\Delta_k F\|_{L^2} \,{\sum_{m \le j-1} }2^{(1+\frac{d}{2}) m}\, \|\Delta_m G\|_{L^2} \notag\\
&& \quad + \sum_{k\ge j-1} 2^j\, 2^{\frac{d}{2}k}\,  \|\Delta_k F\|_{L^2} \|\widetilde{\Delta}_k G\|_{L^2}\Big). \label{gg1}
\een
\item Let $F$ be a divergence-free vector field. Then there exists a constant $C$ independent of $j$ such that
\ben
&& \left|\int_{\mathbb R^d} \Delta_j (F\cdot \na G) \cdot \Delta_j G \,dx \right| \notag\\
&\le & \,C\, \|\Delta_j G\|_{L^2} \,\Big({\sum_{m \le j-1}} 2^{(1+\frac{d}{2}) m}\, \|\Delta_m F\|_{L^2}\, \sum_{|j-k| \le 2}\|\Delta_k G\|_{L^2} \notag\\
&& \quad + \sum_{|j-k| \le 2}\|\Delta_k F\|_{L^2} \,\sum_{m \le j-1} 2^{(1+\frac{d}{2}) m}\, \|\Delta_m G\|_{L^2} \notag\\
&& \quad + \sum_{k\ge j-1} 2^j\, 2^{\frac{d}{2}k}\,  \|\Delta_k F\|_{L^2} \|\widetilde{\Delta}_k G\|_{L^2}\Big) \label{gg2}
\een
\item Let $F$ be a divergence-free vector field. Then there exists a constant $C$ independent of $j$ such that
\ben
&& \left|\int_{\mathbb R^d} \Delta_j (F\cdot \na H) \cdot \Delta_j G \,dx + \int_{\mathbb R^d} \Delta_j (F\cdot \na G) \cdot \Delta_j H \,dx\right| \notag\\
& \le &\,C\, \|\Delta_j G\|_{L^2} \,\Big({\sum_{m \le j-1}} 2^{(1+\frac{d}{2}) m}\, \|\Delta_m F\|_{L^2}\, \sum_{|j-k| \le 2}\|\Delta_k H\|_{L^2} \notag\\
&& \quad + \sum_{|j-k| \le 2}\|\Delta_k F\|_{L^2} \,\sum_{m \le k-1} 2^{(1+\frac{d}{2}) m}\, \|\Delta_m H\|_{L^2} \notag\\
&& \quad + \sum_{k\ge j-1} 2^j\, 2^{\frac{d}{2}k}\,  \|\Delta_k F\|_{L^2} \|\widetilde{\Delta}_k H\|_{L^2}\Big)  \notag\\
&& \quad + \,C\, \|\Delta_j H\|_{L^2} \,\Big({\sum_{m \le j-1}} 2^{(1+\frac{d}{2}) m}\, \|\Delta_m F\|_{L^2}\, \sum_{|j-k| \le 2}\|\Delta_k G\|_{L^2} \notag\\
&& \quad + \sum_{|j-k| \le 2}\|\Delta_k F\|_{L^2} \,\sum_{m \le k-1} 2^{(1+\frac{d}{2}) m}\, \|\Delta_m G\|_{L^2} \notag\\
&& \quad + \sum_{k\ge j-1} 2^j\, 2^{\frac{d}{2}k}\,  \|\Delta_k F\|_{L^2} \|\widetilde{\Delta}_k G\|_{L^2}\Big). \label{g83}
\een
\end{enumerate}
\end{lemma}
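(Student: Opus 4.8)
The plan is to derive all three estimates from a single source: the Bony paraproduct decomposition recorded in the appendix,
$$
F\cdot\nabla G = \underbrace{\sum_m S_{m-1}F\cdot\nabla\Delta_m G}_{T_1} + \underbrace{\sum_m \Delta_m F\cdot\nabla S_{m-1}G}_{T_2} + \underbrace{\sum_k \Delta_k F\cdot\nabla\widetilde{\Delta}_k G}_{R},
$$
combined with the Bernstein inequalities of Lemma \ref{bern} and the hypothesis $\nabla\cdot F=0$. After applying $\Delta_j$, the spectral localization of the factors forces $T_1$ and $T_2$ to retain only the indices $|m-j|\le 2$ and $R$ only $k\ge j-1$; this is exactly what produces the three characteristic sums on each right-hand side.

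For part (1) I would pair each piece with $\Delta_j H$ and apply H\"older's inequality. In the low-high term $T_1$ I estimate $\|S_{m-1}F\|_{L^\infty}\le C\sum_{m'\le m-2}2^{\frac d2 m'}\|\Delta_{m'}F\|_{L^2}$ and $\|\nabla\Delta_m G\|_{L^2}\le C\,2^m\|\Delta_m G\|_{L^2}$ (with $2^m\simeq 2^j$ since $|m-j|\le 2$), giving the first sum in (\ref{gg1}); in the high-low term $T_2$ I instead place $\nabla S_{m-1}G$ in $L^\infty$, which transfers the weight $2^{(1+\frac d2)m}$ onto $G$ and yields the second sum. The remainder $R$ is the only place where divergence-freeness enters part (1): writing $\Delta_k F\cdot\nabla\widetilde{\Delta}_k G=\nabla\cdot(\Delta_k F\,\widetilde{\Delta}_k G)$ and integrating by parts moves the derivative onto $\Delta_j H$, so that $\|\nabla\Delta_j H\|_{L^2}\le C\,2^j\|\Delta_j H\|_{L^2}$ replaces the naive $2^k$ by $2^j$ and renders the $k$-sum summable; together with $\|\Delta_k F\|_{L^\infty}\le C\,2^{\frac d2 k}\|\Delta_k F\|_{L^2}$ this produces the third sum.

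For part (2), where $H=G$, the high-low and remainder terms are handled verbatim, but the low-high term now admits the cancellation that is the heart of the lemma. I would extract the exactly diagonal contribution $\int S_{j-1}F\cdot\nabla\Delta_j G\cdot\Delta_j G\,dx$, which vanishes because $\nabla\cdot S_{j-1}F=0$ turns it into $\tfrac12\int S_{j-1}F\cdot\nabla|\Delta_j G|^2\,dx=0$; what survives is a commutator $[\Delta_j,\,S_{m-1}F\cdot\nabla]\Delta_m G$ plus the near-diagonal difference $(S_{m-1}F-S_{j-1}F)\cdot\nabla$. A standard mean-value commutator estimate bounds these by $C\,\|\nabla S_{m-1}F\|_{L^\infty}\|\Delta_m G\|_{L^2}$, which shifts the derivative from $G$ onto $F$ and thereby replaces the weight $2^j 2^{\frac d2 m}$ of part (1) by the sharper weight $2^{(1+\frac d2)m}$ appearing in (\ref{gg2}). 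Part (3) is the off-diagonal analogue of this mechanism: after adding the two integrals, the leading low-high pieces combine into $\int S_{j-1}F\cdot\nabla\big(\Delta_j G\cdot\Delta_j H\big)\,dx$, which again vanishes by $\nabla\cdot S_{j-1}F=0$, leaving commutator and remainder terms that are estimated exactly as before, now symmetrically in $G$ and $H$, to yield (\ref{g83}).

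I expect the main obstacle to be the commutator and cancellation bookkeeping in parts (2) and (3): one must track precisely which frequency interactions survive the cutoff $\Delta_j$, verify that the exactly diagonal piece is the unique source of a genuine derivative loss and that it is exactly the piece annihilated by the divergence-free condition, and check that the residual commutators gain the derivative on $F$ with a constant uniform in $j$. Everything else—the Bernstein bounds, the H\"older pairings, and the integration by parts in the remainder—is routine once this cancellation structure has been correctly isolated.
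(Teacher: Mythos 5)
Your proposal is correct and follows essentially the same route as the paper's own proof: the same Bony decomposition localized by $\Delta_j$, H\"older plus Bernstein with $\|S_{k-1}F\|_{L^\infty}$-type bounds for part (1), integration by parts via $\nabla\cdot F=0$ in the high-high remainder, and, for parts (2) and (3), extraction of the diagonal transport term $S_jF\cdot\nabla\Delta_jG$ (killed by divergence-freeness) together with a commutator and a near-diagonal correction term bounded so that the derivative lands on $F$. The only deviations are cosmetic --- you take $S_{j-1}F$ where the paper takes $S_jF$ as the reference field, and in the remainder you put $\Delta_kF$ rather than $\widetilde\Delta_kG$ in $L^\infty$ --- so nothing needs to be changed.
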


\begin{proof}
The proof of these inequalities essentially follow from the paraproduct decomposition. By the paraproduct decomposition,
\beno
\Delta_j (F\cdot \na G) &=& \sum_{|j-k| \le 2} \Delta_j (S_{k-1} F\cdot \Delta_k\na G) + \sum_{|j-k|\le 2} \Delta_j ( \Delta_k F\cdot S_{k-1}\na G) \\
&& + \sum_{k\ge j-1} \Delta_j (\Delta_k F
\cdot  \na \widetilde{\Delta}_k G).
\eeno
By H\"{o}lder's inequality and Bernstein's inequality in Lemma \ref{bern},
\beno
\left|\int_{\mathbb R^d} \Delta_j (F\cdot \na G) \cdot \Delta_j H \,dx \right|
&\le& \|\Delta_j H\|_{L^2} \, \Big(\sum_{|j-k| \le 2} 2^k \,\|S_{k-1} F\|_{L^\infty} \, \|\Delta_k G\|_{L^2} \\
&& + \sum_{|j-k| \le 2} \|\Delta_k F\|_{L^2}\, \|S_{k-1}\na G||_{L^\infty} \\
&& + \sum_{k\ge j-1} 2^j \, \|\Delta_k F\|_{L^2} \|\widetilde{\Delta}_k G\|_{L^\infty}
\Big),
\eeno
	where we have used $\na\cdot F=0$ in the last part. (\ref{gg1}) then follows if we invoke the inequalities of the form
\beq\label{gg3}
\|S_{k-1} F\|_{L^\infty} \le \sum_{m\le k-2} 2^{\frac{d}{2} m}\,  \|\Delta_m F\|_{L^2}.
\eeq
To prove (\ref{gg2}), we further write the first term as the sum of a commutator and two correction terms,
\beno
\Delta_j (F\cdot \na G)&=& \sum_{|j-k| \le 2} [\Delta_j,  S_{k-1} F\cdot\na]
\Delta_k G\\
&& + \sum_{|j-k| \le 2} (S_{k-1} F -S_j F)
\cdot \Delta_j\Delta_k \na G\\
&& + \, S_j F \cdot\na \Delta_j G + \sum_{|j-k|\le 2} \Delta_j ( \Delta_k F\cdot S_{k-1}\na G) \\
&& + \sum_{k\ge j-1} \Delta_j (\Delta_k F
\cdot  \na \widetilde{\Delta}_k G).
\eeno
Due to $\na\cdot F=0$,
$$
\int_{\mathbb R^d}  S_j F \cdot\na \Delta_j G\cdot \Delta_j G \,dx =0.
$$
By  H\"{o}lder's inequality, Bernstein's inequality and a commutator estimate,
\beno
\left|\int_{\mathbb R^d} \Delta_j (F\cdot \na G) \cdot \Delta_j G \,dx \right|
&\le& \|\Delta_j G\|_{L^2} \,\Big(\sum_{|j-k|\le 2} \|\na S_{k-1} F\|_{L^\infty} \|\Delta_k G\|_{L^2} \\
&& + \,C\, 2^{(1+\frac{d}{2})j} \,\sum_{|j-k| \le 2} \|\Delta_k F\|_{L^2}\, \|\Delta_j G \|_{L^2} \\
&& + \sum_{|j-k|\le 2} \|\Delta_k F\|_{L^2} \,\|S_{k-1}\na G\|_{L^\infty} \\
&& + \sum_{k\ge j-1} 2^j\, 2^{\frac{d}{2} k}\,  \|\Delta_k F\|_{L^2} \, \|\widetilde{\Delta}_k G \|_{L^2} \Big).
\eeno
(\ref{gg2}) then follows when we invoke similar inequalities as (\ref{gg3}). The proof of (\ref{g83}) is very similar. This completes the proof of Lemma \ref{para}.
\end{proof}

\vskip .3in
\centerline{\bf Acknowledgments}

\vskip .1in
Q. Jiu was partially supported by the National Natural Science Foundation of
China (NNSFC) (No. 11671273) and Beijing Natural Science Foundation (BNSF) (No. 1192001).
J. Wu was partially
supported by the National Science Foundation of the United States under grant
number DMS-1614246 and by the AT\&T Foundation at the Oklahoma State University.

\vskip .3in
\bibliographystyle{plain}

\end{document}